\newtheorem{theorem}{Theorem}
\newenvironment{proof}[1][Proof]{\noindent\textbf{#1.} }{\ \rule{0.5em}{0.5em}}
\author{Ioannis Dimitriou \footnote{idimit@uoi.gr}\footnote{Corresponding author.}}
\affil{\small Department of Mathematics, 
	University of Ioannina, 
	45110, Ioannina, Greece.}
\begin{document}
\title{On a class of multiplicative Lindley-type recursions with Markov-modulated dependencies}

\maketitle
\begin{abstract}
   In this paper, we study Markov-modulated dependencies for the multiplicative Lindley's recursion $W_{n+1}=[V_{n}W_{n}+Y_{n}(V_{n})]^{+}$, where $Y_{n}(V_{n})$ may depend on $V_{n}$, and can be written as the difference of two nonnegative random variables that also depend on a common background discrete-time Markov chain $\{Z_{n}\}_{n\in\mathbb{N}}$. Given the state of the background Markov chain, we consider two cases: a) $V_{n}$ equals either 1, or $a\in(0,1)$, or it is negative with certain probabilities, and $Y_{n}(V_{n}):=Y_{n}=S_{n}-A_{n+1}$, where both $A_n$ and $S_n$ have a rational Laplace-Stieltjes transform (LST). b) $V_{n}$ equals $1$ or $-1$ according to certain probabilities, and $Y_{n}(V_{n})$ follow a more general scheme, dependent on $V_{n}$. In both cases, we derive the LST of the stationary transform vector of $\{W_{n}\}_{n\in\mathbb{N}_{0}}$. In the second case, we also provide a recursive approach to obtain the steady-state moments and investigate its asymptotic behavior. A simple numerical example illustrates the theoretical findings.
    \end{abstract}
    \vspace{2mm}
	
	\noindent
	\textbf{Keywords}: {Workload; Laplace-Stieltjes transform; Recursion; Markov-modulation}

\section{Introduction}
\label{sec1}
In this work, we focus on the stationary analysis of the Markov-modulated multiplicative version of the Lindley-type recursion
\begin{equation}
     W_{n+1}=[V_{n}W_{n}+Y_{n}(V_{n})]^{+}, \label{recu1}
\end{equation}
with $[x]^{+}:=max\{x,0\}$. We assume that $Y_{n}(V_{n})$ can be written as a difference of two nonnegative random variables that may depend on $V_{n}$. Stochastic recursions have received much attention in the applied probability
literature, and especially those of autoregressive type,
due to their wide applicability across various scientific domains including biology, finance, and engineering, e.g., \cite{brandt,bura,emb,collam,le}.

The vast majority of the related works focus on the investigation of the stability conditions, limit theorems and questions related
to queuing applications \cite{whitt}, whereas our primary focus lies on the derivation of results for the stationary distribution of the process under investigation. In \cite{alt1,alt2,altfie}, the authors studied similar linear stochastic recursions (with applications in branching processes, and queueing systems), but their focus was only on deriving the first two (stationary, and transient) moments, and not on deriving the distribution of their main argument. Furthermore, most of the works on this field refers to the scalar case. Our focus is on the derivation of the stationary distribution (in terms of LST) of vector-valued stochastic recursions of the type in \eqref{recu1}, which in turn result in vector-valued functional equations.  

In this work, we focus on two models. In Model I, the $V_n$ are either equal to one, or equal to a positive constant $a\in(0,1)$, or negative, according to certain probabilities. We further assume that $Y_{n}(V_{n})=S_{n}-A_{n+1}$ (i.e., independent of $V_{n}$), and we demand that both the positive and the negative parts of the $Y_{n}$ have a rational Laplace–Stieltjes transform (LST). Moreover, $S_{n}$, $A_{n+1}$ are regulated by a finite state space irreducible Markov chain, thus, given the state of the background Markov chain, $S_{n}$, $A_{n+1}$ are conditionally independent.
 In Model II,
the $V_n$ are equal either to one, or to minus one, and $Y_{n}(V_{n})$ have a more general form that is dependent on $V_{n}$. 


\subsection{Related work}
\label{subsec1}
In \cite{box1}, the authors considered the (scalar) recursion $W_{n+1}=[aW_{n}+S_{n}-A_{n}]^{+}$, where $\{S_{n}-A_{n}\}_{n\in\mathbb{N}_{0}}$ forms a sequence of independent and identically
distributed (i.i.d.) random variables and $a\in(0,1)$. The authors investigated both the transient and stationary behaviour. Note that in such a case, $W_{n}$ could be interpreted as the workload in a queueing system just before the $n$th arrival, which adds $S_{n}$ work, and makes obsolete a fixed fraction $1-a$ of the already present work. The case where $a=1$ corresponds to the classical Lindley recursion describing the waiting time of the classical G/G/1 queue, while the case where $a=-1$ was investigated in \cite{vlasiou}. 

In \cite{box2}, the authors provided transient and the stationary characteristics for the scalar autoregressive process described by the recursion $W_{n+1}=[V_{n}W_{n}+S_{n}-A_{n}]^{+}$, where $V_{n}$ can be positive or negative, while in \cite{box3}, the authors considered the case where $V_{n}W_{n}$ was replaced by $S(W_{n})$, where $\{S(t)\}$ is a Levy subordinator. Motivated by applications that arise in queueing and insurance risk models, the authors in \cite{boxman} considered Lindley-type recursions where $\{B_{n}\}_{n\in\mathbb{N}_{0}}$, $\{A_{n}\}_{n\in\mathbb{N}_{0}}$ obey a semi-linear dependence. The seminal work in \cite{adan} provided a method to study functional equations that arise in a wide range of queueing, autoregressive and branching processes, and where (scalar) Lindley-type recursions arise. A generalized version of the model in \cite{box2}, was investigated in \cite{hoo}. Quite recently, in \cite{dimitriou2024} the authors generalized the work in \cite{box1} by considering, among others, non-trivial dependence structures among $\{S_{n}\}_{n\in\mathbb{N}}$, $\{A_{n}\}_{n\in\mathbb{N}}$. The model in \cite{boxvla} is related to our work (in Model II) but corresponds to the scalar case.

Quite recently, in \cite{dimitriou2024markov}, the author investigated vector-valued reflected autoregressive processes that are described by vector-valued Lindley type recursions of the form $W_{n+1}=[aW_{n}+S_{n}-A_{n+1}]^{+}$, where the sequences $\{S_{n}\}_{n\in\mathbb{N}_{0}}$, $\{A_{n}\}_{n\in\mathbb{N}_{0}}$ are governed by an irreducible finite state background Markovian process, thus, considered the Markov-dependent version of the process analysed in \cite{box1}. Note that the specific case of $a = 1$ in \cite{dimitriou2024markov} corresponds to the waiting time in a single server queue with Markov-dependent interarrival and service times studied in \cite{adan2}; see also \cite{albbox} for a Markov-dependent structure related in insurance mathematics. The case where $a=-1$ was investigated in \cite{vlasioudep} in the context of carousel models. In \cite{dimitriou2024markov} the focus was on the case where $a\in(0,1)$. Moreover, the author also discussed the case where $\{(S_{n},A_{n+1})\}_{n\in\mathbb{N}_{0}}$ form a sequence of i.i.d. random vectors with a distribution function defined by the Farlie-Gumbel-Morgenstern copula, as well as the case where $\{(S_{n},A_{n+1})\}_{n\in\mathbb{N}_{0}}$ have a bivariate matrix-exponential distribution. In both cases, the joint distribution is also dependent on the state of the background discrete time Markov chain.



\subsection{Contribution}
\begin{enumerate}
\item A major contribution of the present paper, regarding Model I, is that we derive and solve a system of Wiener–Hopf equations,  which allows us to study the stationary behavior of the $\{W_{n}\}_{n\in\mathbb{N}_{0}}$ process. To the best of the author’s knowledge, in Model I we studied for the first time a recursion of the form $W_{n+1} =[V_{n}W_{n}+ Y_{n}(V_{n})]^{+}$, where $Y_{n}(V_{n})=Y_{n}=S_n - A_{n+1}$, $P(V_{n}=1)=p_{1}$, $P(V_{n}=a)=p_{2}$, where $a\in (0, 1)$, $P(V_{n}<0)=p_{3}=1-p_{1}-p_{2}$, under a semi-Markov dependent framework introduced in \cite{adan2} (see Section \ref{mod1}). We focus on the most interesting case where $0<p_{i}<1$, $i=1,2,3$, thus, we provide a
significant extension to the existing results by allowing the $V_n$ to take arbitrary distributions on $(-\infty,0]$. Here, we demand that both the positive and the negative parts of the $Y_{n}$ have a
rational LST. Moreover, contrary to the work in \cite{box2} where the authors studied a scalar version (non-modulated), we introduce Markov-modulated dependencies, which lead to a vector-valued fixed-point functional equation that will be solved recursively, with some additional technical requirements. By studying \eqref{recu1}, we significantly extend the analysis of the Lindley recursion as well as the analysis of the Markov-modulated stochastic recursion in \cite{dimitriou2024markov}, where vector-valued recursions of the above form were studied for the case $P(V_{n}=a)=1$, $a\in (0, 1)$. By letting the random variable $V_{n}$ to be negative we introduce to the model specific mathematical intricacies, but still, we are able to get explicit results in terms of Laplace transforms. In contrast to the scalar case analyzed in \cite{box2}, our vector-valued framework results in a system of Wiener-Hopf equations (instead of a single equation in \cite{box2}) that we succeed to solve and prove the convergence of its solution. We also discuss the stability condition. Moreover, contrary to \cite{altfie}, where the authors focused on the derivation of the first two moments, in our work our primary focus is on the stationary distribution of the process under study. 

Moreover, we also analyzed a special case that results in a different solution approach. In particular, we consider the case where $p_{3}=1$ (i.e., $p_{1}=p_{2}=0$), and we drop the assumption that $A_{n}$, given the state of the background Markov chain, has a rational LST. With this special case, we generalize the corresponding model in \cite[Section 4]{box2} from the scalar case to the vector case, thus, filling an important gap in the literature, since we are now dealing with a vector-valued functional equation.
\item Our second contribution refers to the analysis of a Markov-modulated Lindley-type recursion in Model II, for which the monotonicity assumption does not
hold. The scalar Lindley recursion is well studied in the applied probability literature and refers to the class of discrete time Markov chains described by the recursion $W_{n+1}=F(W_{n},X_{n})$. An important assumption that is often made in the related literature is that the function $F(w,x)$ is non-decreasing in its main argument $w$ \cite{asmussen}. In particular, the recursion we focus on does not satisfy that usual assumption, which in turn, does not allow us to
exploit the known duality results between Markov processes and monotone increasing continuous processes; \cite[Section IX.4]{asmussen}. More precisely, we go one step further and study in detail the Markov-modulated version of that recursion that does not satisfy the monotonicity property, thus, filling an important gap in the related literature. Moreover, although we incorporate Markov-modulation to capture fluctuations in interarrival and service times, we are still able to solve explicitly the vector-valued functional equation in terms of Laplace transforms. Contrary to the scalar case in \cite{boxvla}, where the authors by solving a Wiener-Hopf boundary value problem derived the LST of the waiting time distribution, we are able to solve a vector-valued functional equation by using simpler arguments, and without using the Wiener-Hopf method. This is quite promising since our Markov-modulated framework we use, allows for solving even more complicated problems without increasing significantly the mathematical difficulties.

More precisely, in Section \ref{mod2} we study the Markov-dependent multiplicative Lindley-type recursion
\begin{equation}
    W_{n+1}=[V_{n}W_{n}+Y_{n}(V_{n})]^{+},\label{req}
\end{equation}
where for every $n$ the random variable $V_{n}$ is equal to plus
or minus one according to $P(V_{n}=1)=p$, $P(V_{n}=-1)=q:=1-p$, $p\in[0,1]$. Moreover, for each $n$ the random variable $Y_{n}(V_{n})$ is written as the difference of two random variables as shown below:
\begin{displaymath}
    Y_{n}(V_{n})=\left\{\begin{array}{ll}
         S_{n}-A_{n+1},&\text{ when }V_{n}=1,  \\
         D_{n+1}-C_{n},& \text{ when }V_{n}=-1.
    \end{array}\right.
\end{displaymath}
On top of that, we assume that the sequences $\{A_n\}_{n\in\mathbb{N}_{0}}$, $\{B_n\}_{n\in\mathbb{N}}$ (resp. $\{C_n\}_{n\in\mathbb{N}_{0}}$, $\{D_n\}_{n\in\mathbb{N}}$) are assumed to be conditional independent sequences of i.i.d. non-negative random variables, given the state of an irreducible background Markov chain with finite state space (i.e., Markov-modulated dependencies). Our main goal is to derive the Laplace-Stieltjes transform (LST) of the steady-state distribution of $\{W_n, n = 1, 2,...\}$, when it
exists. 

To the author's best knowledge, it is the first time in the related literature that such a Markov-modulated multiplicative version of Lindley recursion is studied. Contrary to the case in \cite{boxvla}, where the authors studied a scalar version of \eqref{req}, in our work we consider the Markov-modulated version, by further allowing $Y_{n}$ to depend on $V_{n}$. For such a model, we derive the transform vector of the LSTs of the steady-state distribution of the waiting time. We further provided a recursive approach to obtain the steady-state moments of the waiting time distribution, as well as to investigate the asymptotic behaviour of the steady-state waiting distribution for light-tailed service time distributions. 

Despite our methodological contribution, our work has also strong connections with respect to the performance analysis of service systems. In particular, this state-dependent
queuing processes, as discussed below  (see also \cite{boxvla} for the scalar case), are connected to LaPalice queues \cite{Jacquet}, in which customers are
scheduled to be served such that the period between two consecutively scheduled customers is greater than or equal to
the service time of the first customer. The Markov-modulated framework we incorporate, provide an even more realistic aspect of the model, since in many practical situations, the customer arrivals and the service mechanism of a queueing system are both influenced by some factors external
or internal to the system \cite{tang}. In financial terms, $W_n$ may represent an inventory in time period $n$ (e.g. cash), $V_n$ may
represent a multiplicative, possibly random, decay or growth factor between times $n$ and $n + 1$
(e.g. interest rate) and $Y_{n}(V_{n})$ may represent a quantity that is added or subtracted between
times $n$ and $n + 1$ (e.g. deposit minus withdrawal).

When $V_{n}=1$ (i.e., $p=1$) we have the MAP/G/1 queue \cite{adan2}, and when $V_{n}=-1$ (i.e., $p=0$, equivalently $q=1$) we have a modulated alternating
service model with two service points that arise in the performance analysis of warehouse systems; see \cite{park,vlasioudep}. Studying a recursion that contains the Markov-modulated version of both Lindley’s classical recursion \cite{adan2} and the recursion in \cite{park,vlasiou} as special
cases it is of interest in its own right. For $p\in(0,1)$, our model describes a Markov-modulated FCFS queue, in which the service times and the interarrival times depend linearly and randomly on the waiting times, as well as on the value of $V_{n}$. 

We now discuss this special queueing model in more detail. 
Consider an extension of a modulated G/G/1 queue in which the service times and the interarrival times depend linearly and randomly on the waiting times. In particular, the model is specified by a stationary and ergodic sequence of non-negative random variables $\{(\bar{K}_{n}(V_{n}),\bar{L}_{n}(V_{n}),\widehat{K}_{n}(V_{n}),\widehat{L}_{n}(V_{n}),V_{n}\}$, $n\geq 0$. Given the state of an irreducible finite state Markov chain, the sequence $\{W_n;n\geq 0\}$ is defined recursively by
\begin{equation}
    W_{n+1}=[W_{n}+K_{n}(V_{n})-L_{n}(V_{n})]^{+},\label{spe}
\end{equation}
where
\begin{displaymath}
    \begin{array}{rl}
        K_{n}(V_{n})= &\bar{K}_{n}(V_{n})+\widehat{K}_{n}(V_{n})W_{n},  \vspace{2mm}\\
         L_{n}(V_{n})= &\bar{L}_{n}(V_{n})+\widehat{L}_{n}(V_{n})W_{n},
    \end{array}
\end{displaymath}
with
\begin{displaymath}
    \begin{array}{lr}
          \bar{K}_{n}(V_{n})=\left\{\begin{array}{ll}
              S_{n}, &\text{when }V_{n}=1,  \\
               D_{n+1},&\text{when }V_{n}=-1, 
          \end{array}\right.,& \bar{L}_{n}(V_{n})=\left\{\begin{array}{ll}
              A_{n+1}, &\text{when }V_{n}=1,  \\
               C_{n},&\text{when }V_{n}=-1. 
          \end{array}\right.
    \end{array}
\end{displaymath}
Note that $\bar{K}_{n}(V_{n})$ can be considered as the nominal service time of the $n$ customer, while $\bar{L}_{n}(V_{n})$ can be considered as the interarrival time between customers $n$ and $n + 1$ customer, and are both dependent on $V_{n}$, as well as on the state of the background Markov chain.

Then, we have a modulated multiplicative Lindley recursion with $V_{n}=1+\widehat{K}_{n}(V_{n})-\widehat{L}_{n}(V_{n})$. When $V_{n}=1$ (with probability $p$), then $\widehat{K}_{n}(1)=\widehat{L}_{n}(1)$, while when $V_{n}=-1$ (with probability $q=1-p$), $\widehat{L}_{n}(-1)=2+\widehat{K}_{n}(-1)$. This state-dependent
queuing processes, as also mentioned in \cite{boxvla}, are connected to a class of queueing systems \cite{Jacquet}, in which customers are
scheduled such that the period between two consecutively scheduled customers is greater than or equal to
the service time of the first customer. 
\end{enumerate}
\subsection{ Structure of the paper}
The paper is organized as follows. In Section \ref{mod} we provide the description of the two models and some preliminaries. Sections \ref{mod1}, \ref{mod2} are devoted to the stationary analysis of Models I, II, respectively. In Section \ref{num}, we provide a simple numerical example to illustrate the theoretical findings. Some concluding remarks and suggestions for future research are presented in Section \ref{conc}.


\section{Model description and preliminaries}\label{mod}
In this work we focus on the stationary analysis of a Markov-dependent version of the recursion \eqref{recu1}, where $\{Y_{n}(V_{n})\}_{n\in\mathbb{N}_{0}}$ is a sequence of i.i.d random variables that is written as a difference of two random variables. Their distributions are regulated by an irreducible discrete-time Markov chain $\{Z_{n}\}_{n\in\mathbb{N}_{0}}$ with state space $E=\{1, 2,...,N\}$, one-step transition probability matrix $P:=(p_{i,j})_{i,j\in E}$, and stationary distribution $\tilde{\pi}:=(\pi_{1},\ldots,\pi_{N})^{T}$. 

Let $\Phi_{W,i}^{n}(s):=E(e^{-sW_{n}}1_{\{Z_{n}=i\}})$, $i=1,\ldots,N,$ $Re(s)\geq 0$, $n\geq 0$, and assuming the limit exists, define $\Phi_{W,i}(s)=\lim_{n\to\infty}\Phi_{W,i}^{n}(s)$, $i=1,\ldots,N$. Let also  $\tilde{\Phi}_{W}(s):=(\Phi_{W,1}(s),\ldots,\Phi_{W,N}(s))^{T}$.

In this paper, we discuss the following two variants of the model in \eqref{recu1}:
\begin{enumerate} 
\item \textit{Model I:} $P(V_{n}=1)=p_{1}$, $P(V_{n}=a)=p_{2}$, $a\in(0,1)$, $P(V_{n}<0)=p_{3}:=1-p_{1}-p_{2}$, $Y_{n}(V_{n}):=Y_{n}=S_{n}-A_{n+1}$. We further assume that for $n\geq 0$, $x,y\geq 0$, $i,j=1,\ldots,N$:
\begin{equation}
    \begin{array}{l}
         P(A_{n+1}\leq x,S_{n}\leq y,Z_{n+1}=j|Z_{n}=i,(C_{r+1},D_{r},Z_{r}),0\leq r\leq n-1) \vspace{2mm}\\
        =  P(A_{n+1}\leq x,S_{n}\leq y,Z_{n+1}=j|Z_{n}=i) \vspace{2mm}\\=p_{i,j}P(A_{n+1}\leq x,S_{n}\leq y|Z_{n}=i,Z_{n+1}=j)=p_{i,j}F_{S,i}(y)G_{A,j}(x),
    \end{array}\label{mp1}
\end{equation}
where $F_{S,i}(.)$, $G_{A,j}(.)$ denote the distribution functions of $S_{n}$, $A_{n+1}$, given $Z_{n}=i$, $Z_{n+1}=j$, respectively. Note that $A_{n+1}$, $S_{n}$, $Z_{n+1}$ are independent of the past given $Z_{n}$, and $A_{n+1}$, $S_{n}$ are conditionally independent given $Z_{n}$, $Z_{n+1}$. We assume that the sequences $\{A_n\}_{n\in\mathbb{N}}$ and $\{S_n\}_{n\in\mathbb{N}_{0}}$ are autocorrelated as well as cross-correlated. 
\item \textit{Model II:} $P(V_{n}=1)=p$, $P(V_{n}=-1)=q:=1-p$, and
    \begin{displaymath}
    Y_{n}(V_{n})=\left\{\begin{array}{ll}
         S_{n}-A_{n+1},&\text{ when }V_{n}=1,  \\
         C_{n+1}-D_{n},& \text{ when }V_{n}=-1.
    \end{array}\right.
\end{displaymath}
Note that when $V_{n}=1$, the dynamics among $\{A_{n}\}_{n\in\mathbb{N}}$, $\{S_{n}\}_{n\in\mathbb{N}_{0}}$ are governed by \eqref{mp1}. When $V_{n}=-1$, then,
\begin{equation}
    \begin{array}{l}
         P(D_{n+1}\leq x,C_{n}\leq y,Z_{n+1}=j|Z_{n}=i,(D_{r+1},C_{r},Z_{r}),0\leq r\leq n-1) \vspace{2mm}\\
        =  P(D_{n+1}\leq x,C_{n}\leq y,Z_{n+1}=j|Z_{n}=i) \vspace{2mm}\\=p_{i,j}P(D_{n+1}\leq x,C_{n}\leq y|Z_{n}=i,Z_{n+1}=j)=p_{i,j}F_{C,i}(y)G_{D,j}(x),
    \end{array}\label{mp2}
\end{equation}
where $F_{C,i}(.)$, $G_{D,j}(.)$ denote the distribution functions of $C_{n}$, $D_{n+1}$, given $Z_{n}=i$, $Z_{n+1}=j$, respectively. Note that $D_{n+1}$, $C_{n}$, $Z_{n+1}$ are independent of the past given $Z_{n}$, and $D_{n+1}$, $C_{n}$ are conditionally independent given $Z_{n}$, $Z_{n+1}$. We assume that the sequences $\{D_n\}_{n\in\mathbb{N}}$ and $\{C_n\}_{n\in\mathbb{N}_{0}}$ are autocorrelated as well as cross-correlated. 
\end{enumerate}
\section{Model I: The Markov-dependent mixed case}\label{mod1}
Starting from the recursion 
\begin{equation}
    W_{n+1}=[V_{n}W_{n}+S_{n}-A_{n+1}]^{+},\label{recu}
\end{equation}
and under the Markov-dependent dynamics in \eqref{mp1}, we assume that $V_{n} = 1$ with probability $p_{1}$, $V_{n}=a\in (0,1)$ with probability $p_{2}$, and $V_{n} < 0$ with probability $p_{3}=1-p_{1}-p_{2}$. We consider the case where $0<p_{i}<1$, $i=1,2,3,$ $p_{1}+p_{2}<1$. If $p_{1}=1$, the model is reduced to the one in \cite{adan2}, while if $p_{2}=1$ is reduced to the one in \cite{dimitriou2024markov}, and if $p_{3}=1$ is reduced to the one in \cite{vlasioudep}.
\subsection{On the stability condition}
    We denote with $X$ a generic random variable distributed as $X_{0}$, and always assume that $E(|V|)<\infty$, $E(|Y|)<\infty$. For the scalar case, in \cite[Theorem 1]{whitt} it is shown that if $P(V<0)>0$ and $P(Y\leq 0)>0$, then, $W_{n}$ tends to a proper limit as $n\to\infty$. Note that for our case, $V$ does not depend on the state of the background discrete-time Markov chain, although $P(V<0)=p_{3}>0$. The other requirement is always satisfied since, in the simple case where $A|Z=j\sim exp(\lambda_{j})$, and $S|Z=j$ follow a general distribution with distribution function $B_{j}(.)$, and LST $\beta_{j}^{*}(.)$, $P(Y\leq 0)=\sum_{i=1}^{N}\sum_{j=1}^{N}\pi_{i}p_{i,j}\beta_{i}^{*}(\lambda_{j})>0$, thus, from the regenerative nature of $W_{n}$, the limit $W$ is unique.  
\subsection{Stationary analysis}
We further assume that $S|Z=i$ and $A|Z=j$ have a rational LST, i.e.,
\begin{displaymath}
    \Phi_{B,i}(s):=\int_{0}^{\infty}e^{-st}dF_{S,i}(t):=\frac{N_{B,i}(s)}{D_{B,i}(s)},\,\Phi_{A,j}(s):=\int_{0}^{\infty}e^{-st}dG_{A,i}(t):=\frac{N_{A,j}(s)}{D_{A,j}(s)},
\end{displaymath}
where $\Phi_{B,i}\in\mathbb{Q}[t_{1,i},\ldots,t_{m_{i},i}]$, $i=1,\ldots,N$, $\Phi_{A,j}\in\mathbb{Q}[s_{1,j},\ldots,s_{l_{j},j}]$, $j=1,\ldots,N,$ with $Re(s_{r,j})<0$, $r=1,\ldots,l_{j}$, $j=1,\ldots,N$ and $Re(t_{k,i})<0$, $k=1,\ldots,m_{i}$, $i=1,\ldots,N$. The rationality assumptions are natural in the related literature for the non-modulated G/G/1 queue. While in principle the waiting-time distribution
in the general standard G/G/1 queue can be obtained via a Wiener–Hopf decomposition \cite[Chapter II.5]{cohen1}, the solution is a rather implicit one, unless one makes rationality
assumptions on either the interarrival or the service-time LST \cite{desmit}. Moreover, it is well-known that the distribution of any nonnegative random variable can be approximated arbitrarily closely by the distribution of a random variable with a rational LST \cite[Ch.
III]{asmussen}, so that a restriction to random variables with rational LST leads to just a minor
loss of generality.

In such a setting, denote by $H_{i,j}(s):=E(e^{-sY_{n}}|Z_{n}=i,Z_{n+1}=j)=E(e^{-s(S_{n}-A_{n+1})}|Z_{n}=i,Z_{n+1}=j)=\frac{N_{Y}^{(i,j)}(s)}{D_{Y}^{(i,j)}(s)}=\frac{N_{B,i}(s)N_{A,j}(-s)}{D_{B,i}(s)D_{A,j}(-s)}.$ Denote further the $N\times N$ matrix $H(s):=(H_{i,j}(s))_{i,j=1,\ldots,N}$. Then, we have the following result.
\begin{theorem}\label{theo1}
For $s\in\Omega:=\{s:Re(s)\geq 0,det(I-p_{1}F(s))\neq 0\}$, the transforms $\Phi_{W,j}(s)$, $j = 1,\ldots, N$ satisfy the system
\begin{equation}
    \Phi_{W,j}(s)-\sum_{i=1}^{N}p_{i,j}H_{i,j}(s) \left[ p_{1}\Phi_{W,i}(s)+p_{2}\Phi_{W,i}(as) \right]=\frac{\sum_{\omega=0}^{l_{j}+\sum_{k=1}^{N}m_{k}}c_{\omega,j}s^{\omega}}{\prod_{k=1}^{N}D_{Y}^{(k,j)}(s)},\label{b0}
\end{equation}
where $c_{0,j}=\pi_{j}p_{3}\prod_{k=1}^{N}D_{Y}^{(k,j)}(0)$, and $c_{\omega,j}$, $\omega=1,\ldots,l_{j}+\sum_{k=1}^{N}m_{k}$, $j=1,\ldots,N$ can be determined from the linear systems \eqref{sys2}, \eqref{sys10} given below.

 Equivalently, in matrix notation, the transform vector $\tilde{\Phi}_{W}(s)$
satisfies
\begin{equation}
    \tilde{\Phi}_{W}(s)=R(s)\tilde{\Phi}_{W}(as)+\tilde{V}(s),\label{funeq}
\end{equation}
where
\begin{displaymath}
    \begin{array}{rl}
         R(s):=&p_{2}(I-p_{1}F(s))^{-1}F(s), \vspace{2mm}\\
         \tilde{V}(s):=&(I-p_{1}F(s))^{-1}\tilde{v}(s),
    \end{array}
\end{displaymath}
and $F(s):=P^{T}\circ H(s)$, with '$\circ$' denotes the Hadamard product and 
\begin{displaymath}
    \tilde{v}(s):=(\frac{\sum_{\omega=0}^{l_{1}+\sum_{k=1}^{N}m_{k}}c_{\omega,1}s^{\omega}}{\prod_{k=1}^{N}D_{Y}^{(k,1)}(s)},\ldots,\frac{\sum_{\omega=0}^{l_{N}+\sum_{k=1}^{N}m_{k}}c_{\omega,N}s^{\omega}}{\prod_{k=1}^{N}D_{Y}^{(k,N)}(s)})^{T}. 
\end{displaymath}
Then, the solution to \eqref{funeq} is
\begin{equation}\begin{array}{c}
      \tilde{\Phi}_{W}(s)=\sum_{k=0}^{\infty}\prod_{m=0}^{k-1}R(a^{m}s)\tilde{V}(a^{k}s).
\end{array}
   \label{soel}
\end{equation}
\end{theorem}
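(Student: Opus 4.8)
The plan is to carry out three stages: (i) derive the scalar system \eqref{b0} by one-step conditioning of the recursion \eqref{recu}; (ii) recast it as the vector fixed-point equation \eqref{funeq}; and (iii) solve \eqref{funeq} by iteration and prove that the series \eqref{soel} converges to it. For stage (i), fix $j$ and start from $\Phi_{W,j}^{n+1}(s)=E\bigl(e^{-s[V_nW_n+Y_n]^{+}}1_{\{Z_{n+1}=j\}}\bigr)$. Splitting according to $V_n\in\{1,a\}$ and $V_n<0$, and on the first event using $e^{-s[x]^{+}}=e^{-sx}+(1-e^{-sx})1_{\{x<0\}}$ together with conditioning on $(Z_n,Z_{n+1},V_n)$ — so that, by \eqref{mp1} and the independence of $V_n$ from the background chain and from $W_n$, the factor $e^{-sY_n}$ averages to $H_{i,j}(s)$ while $e^{-sW_n}$ (resp. $e^{-aW_n\,s}$) averages to $\Phi_{W,i}^{n}(s)$ (resp. $\Phi_{W,i}^{n}(as)$) — one obtains
\[
\Phi_{W,j}^{n+1}(s)=\sum_{i=1}^{N}p_{i,j}H_{i,j}(s)\bigl[p_1\Phi_{W,i}^{n}(s)+p_2\Phi_{W,i}^{n}(as)\bigr]+g_j^{n}(s),
\]
where $g_j^{n}(s)$ is defined to be the difference, and thus collects the reflection (idle-time) contributions together with the whole $\{V_n<0\}$ term; these are kept intact rather than expanded, since $e^{-sV_nW_n}$ is not integrable for $Re(s)>0$ when $V_n<0$. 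Letting $n\to\infty$ is legitimate because, by the stability discussion above, $W_n$ converges to a proper limit $W$, so each $\Phi_{W,j}(s)=\lim_n\Phi_{W,j}^{n}(s)$ exists, is analytic for $Re(s)>0$, and is bounded by $\pi_j$ on $Re(s)\ge 0$; the limit of the display is \eqref{b0} with right-hand side $g_j(s)$.

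The crux of stage (i) is to identify $g_j$. The left-hand side of \eqref{b0} is analytic in $Re(s)\ge 0$, whereas $g_j$, being assembled from idle-time and negative-drift events, continues analytically across the zeros of the factors $D_{Y}^{(k,j)}$ lying in $Re(s)<0$; clearing the common denominator $\prod_{k=1}^{N}D_{Y}^{(k,j)}(s)$ and controlling growth at infinity — here the rationality of the LSTs is essential — leaves an entire function which, by a Liouville-type argument, must be a polynomial of degree at most $l_j+\sum_{k=1}^{N}m_k$. This is the system of Wiener--Hopf equations announced for Model I. Its constant term is read off at $s=0$: since $H_{i,j}(0)=1$, $\Phi_{W,i}(0)=\pi_i$ and $\sum_{i}\pi_i p_{i,j}=\pi_j$, the left-hand side of \eqref{b0} equals $\pi_j(1-p_1-p_2)=\pi_j p_3$, whence $c_{0,j}=\pi_j p_3\prod_{k}D_{Y}^{(k,j)}(0)$. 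The remaining $c_{\omega,j}$ are then pinned down by demanding that $\tilde\Phi_W$ have no singularities in $Re(s)\ge 0$ (cancellation at the right-half-plane zeros coming from the $D_{A,j}(-\cdot)$ factors) together with the normalisation $\tilde\Phi_W(0)=\tilde\pi$, which is precisely what the linear systems \eqref{sys2}, \eqref{sys10} encode.

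For stage (ii), read \eqref{b0} across $j=1,\dots,N$ at once: grouping the $\Phi_{W,i}(s)$-terms and the $\Phi_{W,i}(as)$-terms with $F(s)=P^{T}\circ H(s)$ and $\tilde v(s)$ as in the statement (so that the $j$-th coordinate of $F(s)\tilde\Phi_W(s)$ is $\sum_i p_{i,j}H_{i,j}(s)\Phi_{W,i}(s)$, matching \eqref{b0}), the system becomes $\tilde\Phi_W(s)=p_1F(s)\tilde\Phi_W(s)+p_2F(s)\tilde\Phi_W(as)+\tilde v(s)$. On $\Omega$ the matrix $I-p_1F(s)$ is invertible by the very definition of $\Omega$, so left-multiplying by $(I-p_1F(s))^{-1}$ yields \eqref{funeq} with $R(s)=p_2(I-p_1F(s))^{-1}F(s)$ and $\tilde V(s)=(I-p_1F(s))^{-1}\tilde v(s)$.

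For stage (iii), iterate \eqref{funeq} $k$ times to get
\[
\tilde\Phi_W(s)=\Bigl(\prod_{m=0}^{k-1}R(a^{m}s)\Bigr)\tilde\Phi_W(a^{k}s)+\sum_{m=0}^{k-1}\Bigl(\prod_{l=0}^{m-1}R(a^{l}s)\Bigr)\tilde V(a^{m}s),
\]
the products ordered by increasing power of $a$, the empty one being $I$. Since $0<a<1$ we have $a^{k}s\to 0$, so $\tilde\Phi_W(a^{k}s)\to\tilde\Phi_W(0)=\tilde\pi$ stays bounded; moreover $F(a^{m}s)\to F(0)=P^{T}$, and because $p_1<1$ the inverse $(I-p_1F(a^{m}s))^{-1}$ is well defined near $0$ and $R(a^{m}s)\to R(0)=p_2(I-p_1P^{T})^{-1}P^{T}$, whose spectral radius is $p_2/(1-p_1)$ (as $P^{T}$ is stochastic with Perron eigenvalue $1$). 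The non-degeneracy hypothesis $p_1+p_2<1$ forces this spectral radius strictly below $1$; choosing a matrix norm in which $\|R(0)\|<1$ and using continuity, $\|R(a^{m}s)\|\le\theta<1$ for all $m$ beyond some index, hence $\prod_{m=0}^{k-1}R(a^{m}s)\to 0$ geometrically fast. Therefore the first term vanishes as $k\to\infty$, the series converges absolutely, and \eqref{soel} follows; since $\tilde\Phi_W$ is bounded on $Re(s)\ge 0$, the same argument shows it is the unique bounded analytic solution of \eqref{funeq}. I expect the main obstacle to be the analytic step of stage (i) — proving that $g_j$ is rational with denominator $\prod_k D_Y^{(k,j)}(s)$ and a polynomial numerator of the stated degree, and that its coefficients are uniquely determined — since this is the genuine Wiener--Hopf content of the model and now a coupled $N$-dimensional system rather than a single equation; by comparison, the convergence argument in stage (iii) rests only on the elementary bound $p_2/(1-p_1)<1$.
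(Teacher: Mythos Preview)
Your stages (ii) and (iii) match the paper's argument closely and are fine; the convergence bound $\rho(R(0))=p_2/(1-p_1)<1$ is exactly what the paper uses. The genuine problem is in stage (i), and it is precisely the part you flag as the main obstacle.

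The gap is this: by applying the identity $e^{-s[x]^{+}}=e^{-sx}+(1-e^{-sx})1_{\{x<0\}}$ only on the events $V_n\in\{1,a\}$ and leaving the $\{V_n<0\}$ contribution $E\bigl[e^{-s[V_nW_n+Y_n]^{+}}\bigr]$ intact inside $g_j$, you do not obtain a function analytic in $Re(s)\le 0$. Your $g_j$ is the sum of (a) the idle-time pieces from $V_n\in\{1,a\}$, which \emph{are} analytic for $Re(s)\le 0$, and (b) the full $V_n<0$ term, which is analytic for $Re(s)\ge 0$ but not for $Re(s)<0$. So after clearing the denominator $\prod_k D_Y^{(k,j)}(s)$ you do not get an entire function, and the Liouville argument does not go through as written. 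The paper repairs this by applying the identity $e^{-s[x]^{+}}+e^{-s[x]^{-}}=e^{-sx}+1$ to \emph{all three} branches of $V_n$. For $V_n<0$ this produces the extra term $p_3\sum_i p_{i,j}H_{i,j}(s)\int_{-\infty}^{0}\Phi_{W,i}(sy)\,P(V^{-}\in dy)$ on the right-hand side: since $y<0$ and $Re(s)\le 0$ imply $Re(sy)\ge 0$, each $\Phi_{W,i}(sy)$ is well defined there, and together with the $[x]^{-}$ remainder $U_j(s)$ the whole right-hand side of the cleared equation becomes analytic in $Re(s)\le 0$. Only then does Liouville yield the polynomial of degree $l_j+\sum_k m_k$ and the two half-plane identities \eqref{a1}--\eqref{a2}.

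This also explains why your account of how the $c_{\omega,j}$ are fixed is off. System \eqref{sys2} does encode a regularity condition for $\tilde\Phi_W$ in the right half-plane, but at the $\sum_j l_j$ zeros of $\det(I-p_1F(s))$, not at the zeros of $D_{A,j}(-s)$ themselves. More importantly, system \eqref{sys10} is \emph{not} the normalisation $\tilde\Phi_W(0)=\tilde\pi$: it supplies $N\sum_k m_k$ equations obtained by substituting the \emph{left} half-plane zeros $t_{f,k}$ of $D_{B,k}$ into the other Wiener--Hopf relation \eqref{a2}, where the $V^{-}$ integral now appears explicitly and $\Phi_{W,i}(t_{f,k}y)$ is evaluated via \eqref{soel}. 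Without writing down \eqref{a2} --- which your decomposition never produces --- you cannot generate these equations, and the coefficient count does not close.
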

\begin{proof}
Using the recursion \eqref{recu}, and the identity $e^{-s [x]^{+}}+e^{-s [x]^{-}}=e^{-s x}+1$ (where $[x]^{+}:=max\{x,0\}$, $[x]^{-}:=min\{x,0\}$), we have for $Re(s)=0$, the following equation for the transforms $\Phi_{W,j}^{n}(s)$, $j=1,\ldots,N$:
\begin{displaymath}
    \begin{array}{l}
        \Phi_{W,j}^{n+1}(s)= E\left(e^{-sW_{n+1}}1_{\{Z_{n+1}=j\}}\right)=\sum_{i=1}^{N}P(Z_{n}=i)E\left(e^{-sW_{n+1}}1_{\{Z_{n+1}=j\}}|Z_{n}=i\right)  \vspace{2mm}\\
         = \sum_{i=1}^{N}P(Z_{n}=i)E\left(e^{-s[V_{n}W_{n}+S_{n}-A_{n+1}]^{+}}1_{\{Z_{n+1}=j\}}|Z_{n}=i\right)\vspace{2mm}\\
         = \sum_{i=1}^{N}P(Z_{n}=i)p_{i,j}E\left(e^{-s[V_{n}W_{n}+S_{n}-A_{n+1}]^{+}}|Z_{n+1}=j,Z_{n}=i\right)\vspace{2mm} \\
         =  \sum_{i=1}^{N}P(Z_{n}=i)p_{i,j}E\left(e^{-s[V_{n}W_{n}+S_{n}-A_{n+1}]} +1-e^{-s[V_{n}W_{n}+S_{n}-A_{n+1}]^{-}}|Z_{n+1}=j,Z_{n}=i\right)\vspace{2mm} \\
         =\sum_{i=1}^{N}p_{i,j}E\left(e^{-sV_{n}W_{n}}1_{\{Z_{n}=i\}}\right)E\left(e^{-s(S_{n}-A_{n+1})}|Z_{n+1}=j,Z_{n}=i\right)          +U^{n}_{j}(s)\vspace{2mm}\\
         =\sum_{i=1}^{N}p_{i,j}H_{i,j}(s)\left[p_{1} \Phi_{W,i}^{n}(s)+p_{2} \Phi_{W,i}^{n}(as)+p_{3}\int_{-\infty}^{0} \Phi_{W,i}^{n}(sy)P(V_{n}^{-}\in dy)\right]
         +U^{n}_{j}(s),
    \end{array}
\end{displaymath}
where $V_{n}^{-}=\left(V_{n}|V_{n}<0\right)$, and
\begin{displaymath}
    U^{n}_{j}(s):=\sum_{i=1}^{N}P(Z_{n}=i)p_{i,j}E\left(1-e^{-s[V_{n}W_{n}+S_{n}-A_{n+1}]^{-}}|Z_{n+1}=j,Z_{n}=i\right).
\end{displaymath}
Letting $n\to\infty$, substituting $H_{i,j}(s)$ and rearranging we come up with the following functional equation for $Re(s)=0$:
\begin{equation}
    \begin{array}{l}
        \Phi_{W,j}(s)D_{A,j}(-s)\prod_{k=1}^{N}D_{B,k}(s)\vspace{2mm}\\-\sum_{i=1}^{N}p_{i,j}N_{Y}^{(i,j)}(s)\prod_{\nu\neq i}^{N}D_{B,\nu}(s) \left[ p_{1}\Phi_{W,i}(s)+p_{2}\Phi_{W,i}(as) \right]\vspace{2mm}\\
         =p_{3}\sum_{i=1}^{N}p_{i,j}N_{Y}^{(i,j)}(s)\prod_{\nu\neq i}^{N}D_{B,\nu}(s)\int_{-\infty}^{0} \Phi_{W,i}(sy)P(V^{-}\in dy)\vspace{2mm}\\+D_{A,j}(-s)\prod_{k=1}^{N}D_{B,k}(s)U_{j}(s),\,j=1,\ldots,N,
    \end{array}\label{basic}
\end{equation}
where $U_{j}(s)=\lim_{n\to\infty}U^{n}_{j}(s)$. Now the following are true:
\begin{enumerate}
    \item the left-hand side of \eqref{basic} is analytic in $Re (s) >0$ and continuous in $Re (s) \geq 0$,
    \item the right-hand side of \eqref{basic} is analytic in $Re (s) <0$ and continuous in $Re (s) \leq 0$,
\item for large $s$, and $j=1,\ldots,N,$ both sides are $\mathcal{O}(s^{l_{j}+\sum_{k=1}^{N}m_{k}})$ in their respective half-planes.
\end{enumerate}
Both sides in \eqref{basic} are well defined at the boundary $Re (s )= 0$, and the determination of the unknown 
functions $\Phi_{W,j}(s)$ and $U_{j}(s)$ under the conditions 1., 2., and 3. is reduced to the solution of a
Wiener–Hopf boundary value problem; see \cite{cohen}. By introducing a function
$K(s)$ that is equal to the left-hand side of \eqref{basic} for $Re (s) \geq 0$ and to the right-hand
side of \eqref{basic} for $Re (s)\leq 0$, we have a function that is analytic in the whole $s$-plane, and that for large $s$ is $\mathcal{O}(s^{l_{j}+\sum_{k=1}^{N}m_{k}})$. Then, Liouville’s theorem \cite[p. 85]{tit} states that  in their respective half-planes both
sides of \eqref{basic} are equal to the same $(s^{l_{j}+\sum_{k=1}^{N}m_{k}})-$th degree
polynomial in $s$. Thus, for $Re (s) \geq 0$,
\begin{equation}
    \begin{array}{l}
        \Phi_{W,j}(s)D_{A,j}(-s)\prod_{k=1}^{N}D_{B,k}(s)-\sum_{i=1}^{N}p_{i,j}N_{Y}^{(i,j)}(s)\prod_{\nu\neq i}^{N}D_{B,\nu}(s) \left[ p_{1}\Phi_{W,i}(s)\right.\vspace{2mm}\\\left.+p_{2}\Phi_{W,i}(as) \right]=\sum_{\omega=0}^{l_{j}+\sum_{k=1}^{N}m_{k}}c_{\omega,j}s^{\omega},
    \end{array}\label{a1}
\end{equation}
and, for $Re (s) \leq  0$,
\begin{equation}
    \begin{array}{l}
        p_{3}\sum_{i=1}^{N}p_{i,j}N_{Y}^{(i,j)}(s)\prod_{\nu\neq i}^{N}D_{B,\nu}(s)\int_{-\infty}^{0} \Phi_{W,j}(sy)P(V^{-}\in dy)\vspace{2mm}\\+D_{A,j}(-s)\prod_{k=1}^{N}D_{B,k}(s)U_{j}(s)= \sum_{\omega=0}^{l_{j}+\sum_{k=1}^{N}m_{k}}c_{\omega,j}s^{\omega}.         
    \end{array}\label{a2}
\end{equation}

For $s=0$, in either \eqref{a1}, or \eqref{a2} yields $c_{0,j}=\pi_{j}p_{3}D_{A,j}(0)\prod_{k=1}^{N}D_{B,k}(0)$. Note that to fully specify $\tilde{\Phi}_{W}(s)$, we need to derive $c_{\omega,j}$, $\omega=1,2,\ldots,l_{j}+\sum_{k=1}^{N}m_{k}$, $j=1,\ldots,N$, i.e., we need to derive a total $\sum_{j=1}^{N}l_{j}+N\sum_{k=1}^{N}m_{k}$ equations for the unknown terms $c_{\omega,j}$. 

This task can be accomplished in the following steps:
\begin{enumerate}
    \item First note that \eqref{a1} can be rewritten as \eqref{b0}. In matrix notation, \eqref{a1} (or equivalently, \eqref{b0}) results in
\begin{equation}
    (I-p_{1}F(s))\tilde{\Phi}_{W}(s)=p_{2}F(s)\tilde{\Phi}_{W}(as)+\tilde{v}(s),\,Re(s)\geq 0.
    \label{b1}
\end{equation}
For $s\in\Omega=\{s:Re(s)\geq 0,det(I-p_{1}F(s))\neq 0\}$,
\begin{equation}
    \tilde{\Phi}_{W}(s)=R(s)\tilde{\Phi}_{W}(as)+\tilde{V}(s),\label{b2}
\end{equation}
where $R(s)=(I-p_{1}F(s))^{-1}p_{2}F(s)$, $\tilde{V}(s)=(I-p_{1}F(s))^{-1}\tilde{v}(s)$. Iterating \eqref{b2} $n$ times yield
\begin{equation}
   \tilde{\Phi}_{W}(s)=\sum_{k=0}^{n-1}\prod_{m=0}^{k-1}R(a^{m}s)\tilde{V}(a^{k}s)+\prod_{m=0}^{n-1}R(a^{m}s)\tilde{\Phi}_{W}(a^{n}s).\label{conv}
\end{equation}

As $n\to\infty$, $\tilde{\Phi}_{W}(a^{n}s)\to\tilde{\pi}$ and it is readily seen that as $m\to\infty$, $R(a^{m}s)\to p_{2}(I-p_{1}P^{T})^{-1}P^{T}$. For $p_{1}+p_{2}<1$, note also that $||p_{2}(I-p_{1}P^{T})^{-1}P^{T}||\leq r<1$ (where $||C||=max_{i,j}|C_{i,j}|$ is a matrix norm of $C\in \mathbb{C}^{N\times N}$). Therefore, $p_{2}(I-p_{1}P^{T})^{-1}P^{T}$ is a convergent matrix, i.e., as $m\to\infty$, $[p_{2}(I-p_{1}P^{T})^{-1}P^{T}]^{m}\to O$ (where $O$ denotes the zero matrix). Therefore, letting $n\to\infty,$
\begin{equation}
     \tilde{\Phi}_{W}(s)=\sum_{k=0}^{\infty}\prod_{m=0}^{k-1}R(a^{m}s)\tilde{V}(a^{k}s).
    \label{sol}
\end{equation}
\item Following an idea in \cite[Appendix 2]{desmit} (see also \cite{adan2}, and Appendix \ref{roots}), we can show that $det(I-p_{1}F(s))=0$ has exactly $\sum_{j=1}^{N}l_{j}$ roots, say $\delta_{r,j}$, $r=1,\ldots,l_{j}$, $j=1,\ldots,N$, such that $Re(\delta_{r,j})>0$. Therefore, there exist nonzero row vectors $\tilde{\zeta}_{r,j}$, $r=1,\ldots,l_{j}$, $j=1,\ldots,N$, such that 
\begin{equation}\begin{array}{c}
    \tilde{\zeta}_{r,j}(I-p_{1}F(\delta_{r,j}))=0.
\end{array}
    \label{bvz}
\end{equation}

Substituting $s=\delta_{r,j}$, $r=1,\ldots,l_{j}$, $j=1,\ldots,N$ in \eqref{b1}, pre-multiply the resulting equation with $\tilde{\zeta}_{r,j}$, and having in mind \eqref{bvz} results in
\begin{equation*}
\begin{array}{rl}
    \tilde{\zeta}_{r,j}(I-p_{1}F(\delta_{r,j}))\tilde{\Phi}_{W}(\delta_{r,j})=&\tilde{\zeta}_{r,j}[p_{2}F(\delta_{r,j})\tilde{\Phi}_{W}(a\delta_{r,j})+\tilde{v}(\delta_{r,j})]=0,
    \end{array}
\end{equation*}
or equivalently,
\begin{equation}
    p_{2}\tilde{\zeta}_{r,j}F(\delta_{r,j})\tilde{\Phi}_{W}(a\delta_{r,j})=-\tilde{\zeta}\tilde{v}(\delta_{r,j}).\label{sys2}
\end{equation}
Substituting the right-hand side of \eqref{sol} for $s=a\delta_{r,j}$ in \eqref{sys2} gives a system of $\sum_{j=1}^{N}l_{j}$ linear equations. 
\item The remaining $N\sum_{k=1}^{N}m_{k}$ are obtained as follows: Note first that 
\begin{displaymath}
    D_{A,j}(-s)\prod_{k=1}^{N}D_{B,k}(s)=\prod_{r=1}^{l_{j}}(-s-s_{r,j})\prod_{k=1}^{N}\prod_{f=1}^{m_{k}}(t-t_{f,k}).   
\end{displaymath}
Setting $t=t_{f,k}$, $f=1,\ldots,m_{k}$, $k=1,\ldots,N,$ in \eqref{a2}, and substituting \eqref{sol} in the resulting equation yields
\begin{equation}
   \begin{array}{l}
     p_{3}\sum_{i=1}^{N}p_{i,j}N_{Y}^{(i,j)}(t_{f,k})\prod_{\nu\neq i}^{N}D_{B,\nu}(t_{f,k})\int_{-\infty}^{0}\left[\sum_{v=0}^{\infty}\prod_{m=0}^{v-1}R(a^{m}t_{f,k}y)\tilde{V}(a^{v}t_{f,k}y)\right]\vspace{2mm}\\\times P(V^{-}\in dy)=\sum_{\omega=0}^{l_{j}+\sum_{k=1}^{N}m_{k}}c_{\omega,j}(t_{f,k})^{\omega},\,j=1,\ldots,N.
 \end{array}
 \label{sys10}
\end{equation}
By solving \eqref{sys2}, \eqref{sys10}, we obtain $c_{\omega,j}$, $\omega=1,2,\ldots,l_{j}+\sum_{k=1}^{N}m_{k}$, $j=1,\ldots,N$, so that $\tilde{\Phi}_{W}(s)$ is fully specified by \eqref{sol}. This finishes the proof.
\end{enumerate}
\end{proof}

Once $\tilde{\Phi}_{W}(s)$ is fully specified, we can obtain the steady-state moments. Let $m_{i}:=\lim_{n\to\infty}E(W1_{\{Z_{n}=i\}})$, $i=1,\ldots,N$, and $\tilde{m}=(m_{1},\ldots,m_{N})^{T}$. Differentiating \eqref{b1} with respect to $s$ and letting $s\to 0$ we obtain 
\begin{equation}
    \tilde{m}=((p_{1}+ap_{2})P^{T}-I)^{-1}(\Phi\tilde{\pi}+v^{\prime}(0)),
\end{equation}
where $\Phi=P^{T}\circ H^{\prime}(0)$, where $"\prime"$ denotes the first derivative.
\subsection{A special case}
We briefly describe a special case of Model I by assuming $p_{3}=1$, i.e., $V<0$ a.s.. Moreover, we drop the assumption of rationality of $\Phi_{A}(.)$.  We prefer to give a separate analysis of this special case, to make the
reader familiar with the specific mathematical intricacies due to $V_n$ being negative and $V_n$ being a positive constant (see the model in \cite{dimitriou2024markov}). Moreover, the additional assumption that $S_{n,i}$, $A_{n,i}$ have a rational LST plays an important role
in the analysis of Model I, excluding the possibility of simply obtaining the results for this special case directly from those for Model I. Then, following similar arguments as in Theorem \ref{theo1} we obtain for $Re(s)=0$,
\begin{displaymath}
        \Phi_{W,j}^{n+1}(s) =\sum_{i=1}^{N}p_{i,j}\frac{N_{B,i}(s)}{D_{B,i}(s)}\Phi_{A,j}(-s)\int_{-\infty}^{0} \Phi_{W,i}^{n}(sy)P(V_{n}^{-}\in dy)
         +U^{n}_{j}(s).
   \end{displaymath}
Let's first discuss the stability properties of
the system. We assume that there are $i$ and $j$ such that $P(S_{n}< A_{n+1},Z_{n+1}=l|Z_{n}=i)>0$. This implies
that the Markov chain $\{(W_{n},Z_{n}),n\in\mathbb{N}_{0}\}$ is stable; see also \cite{vlasioudep} for a similar argument. So, since $\{Z_n\}$ is aperiodic, the limiting distribution of $\{(W_{n},Z_{n}),n\in\mathbb{N}_{0}\}$ exists,
and $Re(s) > 0$, $n > 1$, and $j = 1, 2, \ldots,N$, $\Phi_{W,j}^{n}(s)\to\Phi_{W,j}(s)$, as $n\to\infty$. Therefore, for $n\to\infty$, $Re(s)=0$,
\begin{equation}
        \Phi_{W,j}(s)\prod_{k=1}^{N}D_{B,k}(s)=\Phi_{A,j}(-s)\sum_{i=1}^{N}p_{i,j}\prod_{\nu\neq i}^{N}D_{B,\nu}(s)\int_{-\infty}^{0} \Phi_{W,i}(sy)P(V^{-}\in dy)+\prod_{k=1}^{N}D_{B,k}(s)U_{j}(s).\label{dg1}
\end{equation}
Then,
\begin{enumerate}
    \item the left-hand side of \eqref{dg1} is analytic in $Re (s) >0$ and continuous in $Re (s) \geq 0$,
    \item the right-hand side of \eqref{dg1} is analytic in $Re (s) <0$ and continuous in $Re (s) \leq 0$,
\item for large $s$, and $j=1,\ldots,N,$ both sides are $\mathcal{O}(s^{\sum_{k=1}^{N}m_{k}})$ in their respective half-planes.
\end{enumerate}
Therefore, for $Re(s)\geq 0$,
\begin{equation}
    \Phi_{W,j}(s)\prod_{k=1}^{N}D_{B,k}(s)=\sum_{\omega=0}^{\sum_{k=1}^{N}m_{k}}c_{\omega,j}s^{\omega},\label{aa1}
\end{equation}
and for $Re(s)\leq 0$,
\begin{equation}
    \Phi_{A,j}(-s)\sum_{i=1}^{N}p_{i,j}\prod_{\nu\neq i}^{N}D_{B,\nu}(s)\int_{-\infty}^{0} \Phi_{W,i}(sy)P(V^{-}\in dy)+\prod_{k=1}^{N}D_{B,k}(s)U_{j}(s)=\sum_{\omega=0}^{\sum_{k=1}^{N}m_{k}}c_{\omega,j}s^{\omega}.\label{aa2}
\end{equation}
For $s=0$, \eqref{aa1}, or \eqref{aa2} yields $c_{0,j}=\pi_{j}\prod_{k=1}^{N}D_{B,k}(0).$ We still need $N\sum_{k=1}^{N}m_{k}$ unknown terms. These terms are obtained as follows: 

Substituting $t=t_{f,k}$, $f=1,\ldots,m_{k}$, $k=1,\ldots,N,$ in \eqref{aa2}, and substituting \eqref{aa1} into the integral (recall that $Re(t_{f,k}<0)$, $\forall f,k$) the resulting equation yields
\begin{equation}
   \begin{array}{l}
     \Phi_{A,j}(-t_{f,k})\sum_{i=1}^{N}p_{i,j}\prod_{\nu\neq i}^{N}D_{B,\nu}(t_{f,k})\int_{-\infty}^{0}\frac{\sum_{\omega=0}^{\sum_{k=1}^{N}m_{k}}c_{\omega,j}(t_{f,k}y)^{\omega}}{\prod_{k=1}^{N}D_{B,k}(t_{f,k}y)} P(V^{-}\in dy)\vspace{2mm}\\=\sum_{\omega=0}^{\sum_{k=1}^{N}m_{k}}c_{\omega,j}(t_{f,k})^{\omega},\,j=1,\ldots,N.
 \end{array}
 \label{sys11}
\end{equation}
The linear system of equations \eqref{sys11} will provide the unknowns $c_{\omega,j}$, $\omega=1,\ldots,\sum_{k=1}^{N}m_{k}$, $j=1,\ldots,N$, so that for $Re(s)\geq 0$,
\begin{equation}
    \Phi_{W,j}(s)=\frac{\sum_{\omega=0}^{\sum_{k=1}^{N}m_{k}}c_{\omega,j}s^{\omega}}{\prod_{k=1}^{N}D_{B,k}(s)},\,j=1,\ldots,N.\label{soaa1}
\end{equation}
\section{Model II: On a Markov-modulated queue with service and interarrival times depending on waiting times} \label{mod2}
We now focus on Markov-modulated recursions of the form
\begin{equation}
    W_{n+1}=[V_{n}W_{n}+Y_{n}(V_{n})]^{+},\label{bqq}
\end{equation}
where $P(V_{n}=1)=p$, $P(V_{n}=-1)=q:=1-p$, and
\begin{displaymath}
    Y_{n}(V_{n})=\left\{\begin{array}{ll}
         S_{n}-A_{n+1},&\text{ when }V_{n}=1,  \\
         D_{n+1}-C_{n},& \text{ when }V_{n}=-1.
    \end{array}\right.
\end{displaymath}

In this work, we assume that $A_{n}|Z_{n}=j\sim exp(\lambda_{j})$, $D_{n}|Z_{n}=j\sim exp(\mu_{j})$, while $S_{n}|Z_{n}=j$, $C_{n}|Z_{n}=j$, are arbitrarily distributed with distribution functions $B_{j}(.)$, and $C_{j}(.)$, $j=1,\ldots,N,$ respectively. Moreover, denote their LSTs with $\beta_{j}^{*}(s)=\int_{0}^{\infty}e^{-st}dB_{j}(t)$, $c_{j}^{*}(s)=\int_{0}^{\infty}e^{-st}dC_{j}(t)$. Assume also that $\gamma_{j}=\int_{0}^{\infty}xdB_{j}(x)$, $\delta_{j}=\int_{0}^{\infty}xdC_{j}(x)$, $j=1,\ldots,N$.

Our focus here is on the case where $p\in(0,1)$, so that the service times and the interarrival
times depend linearly and randomly on the waiting times, as well as on the value of $V_{n}$.
\subsection{On the stability condition}
Following ideas from \cite{whitt,boxvla}, note that from \eqref{bqq}, by replacing $V_{n}$ with $max\{0,V_{n}\}$, and $Y_{n}(V_{n})$ with $max\{0,Y_{n}(V_{n})\}$, then, the
resulting waiting times are least as large as those given in \eqref{bqq}. Then, exploiting ideas from \cite{whitt}, if $W_n$ satisfies \eqref{bqq}, then
with probability 1, $W_n \leq Q_n$ for all $n$, where
\begin{equation}
    Q_{n+1} = max\{0,V_n\}Q_n + max\{0,Y_{n}(V_{n})\},\, n\geq  0,\label{nk}
\end{equation}
and $Q_0 = W_0\geq  0$. Thus, if $W_n$ satisfies \eqref{bqq}, $Q_n$ satisfies \eqref{nk}, and $Q_n$ 
converges to the proper limit $Q$, then $\{W_n\}_{n\in\mathbb{N}}$ is tight and
$P(W>x)\leq  P(Q>x)$ for all $x$, where $W$ is the limit in
distribution of any convergent subsequence of  $\{W_n\}_{n\in\mathbb{N}}$. With that in mind, as well as \cite[Theorem 1]{brandt}, implies that $Q_{n}$ satisfying \eqref{nk} converges to a proper limit if
$P(max\{0,V_n\} = 0) = P(V_{n}\leq 0)=q > 0$. Then, looking at \cite[Theorem 1]{whitt}, the sequence $\{W_n\}_{n\in\mathbb{N}}$ is
tight for all $\rho = \frac{E(\bar{K}_{0}(V_{0}))}{E(\bar{L}_{0}(V_{0}))}=\frac{\sum_{i=1}^{N}\pi_{i}(p\gamma_{i}+q\mu_{i}^{-1})}{\sum_{i=1}^{N}\pi_{i}(p\lambda_{i}^{-1}+q\delta_{i})}$ and $W_0$. If, in addition, $0\leq p < 1$ and $\{(V_n,Y_{n}(V_{n}))\}_{n\in\mathbb{N}_{0}}$ given the state of the background discrete time Markov chain is a sequence of independent
vectors with
$P(V_0\leq 0,Y_{0}(V_{0})) > 0$, then the events $\{W_n = 0\}$ are regeneration points with finite mean time and $\{W_n\}_{n\in\mathbb{N}_{0}}$ converges in distribution to a proper
limit $W$ as $n\to\infty$ for all $\rho$ and $W_0$. Note that when $p=1$, we need to have $\rho<1$; \cite{adan2}.
\subsection{Stationary analysis}
Recall that $\Phi_{W,i}^{n}(s)=E(e^{-sW_{n}}1_{\{Z_{n}=i\}})$, $Re(s)\geq 0$, $n\geq 0$, $i=1,\ldots,N$, and assuming the limit exists, let $\Phi_{W,i}(s)=\lim_{n\to\infty}\Phi_{W,i}^{n}(s)$. Some additional notation: $\Lambda=diag(\lambda_{1},\ldots,\lambda_{N})$, $M=diag(\mu_{1},\ldots,\mu_{N})$, and
\begin{displaymath}
    \begin{array}{rl}
         B^{*}(s)=&diag(\beta^{*}_{1}(s),\ldots,\beta_{N}^{*}(s)), \vspace{2mm} \\
         H(s)=&pB^{*}(s)P\Lambda.
    \end{array}
\end{displaymath}
\begin{theorem}\label{th1}
    The transform row vector $\tilde{\Phi}_{W}^{T}(s)$ satisfies
    \begin{equation}
    \begin{array}{rl}
       \tilde{\Phi}_{W}^{T}(s)[H(s)+sI-\Lambda]=&\tilde{v}(s),
       \end{array} \label{fun}
    \end{equation}
    where, $\tilde{\Phi}_{W}^{T}(0)=\tilde{\pi}^{T}$,
    \begin{displaymath}
        \begin{array}{rl}
             \tilde{v}(s)=&(s\tilde{k}_{1}+s^{2}\tilde{k}_{2}-q\tilde{\pi}\Lambda M)(M+sI)^{-1},  \vspace{2mm}\\
             \tilde{k}_{l}=&(k_{1,l},\ldots,k_{N,l}),\,l=1,2,\vspace{2mm}\\
             k_{j,1}=&q\pi_{j}\mu_{j}+\mu_{j}v_{j}^{(1)}-\lambda_{j}v_{j}^{(-1)},\,j=1,\ldots,N,\vspace{2mm}\\
            k_{j,2}=&v_{j}^{(1)}+v_{j}^{(-1)},\,j=1,\ldots,N,\vspace{2mm}\\
             v_{j}^{(1)}=&p\sum_{i=1}^{N}p_{i,j}\beta_{i}^{*}(\lambda_{j})\Phi_{W,i}(\lambda_{j}),\,j=1,\ldots,N,\vspace{2mm}\\
             v_{j}^{(-1)}=&q[\pi_{j}-\sum_{i=1}^{N}p_{i,j}c_{i}^{*}(\mu_{j})\Phi_{W,i}(\mu_{j})],\,j=1,\ldots,N,
        \end{array}
    \end{displaymath}
\end{theorem}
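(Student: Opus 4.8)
The approach mirrors the transform method in the proof of Theorem~\ref{theo1}, but it turns out to be more direct: because $A_{n+1}$ and $D_{n+1}$ are exponentially distributed, the ``overshoot/undershoot'' contributions can be computed in closed form, and the analysis collapses to the single linear vector equation \eqref{fun} without invoking a Wiener--Hopf decomposition. First I would write, for $Re(s)=0$, the one-step relation for $\Phi_{W,j}^{n+1}(s)=E(e^{-sW_{n+1}}1_{\{Z_{n+1}=j\}})$ by conditioning on $Z_n=i$ and on $V_n\in\{1,-1\}$ and using the identity $e^{-s[x]^{+}}=e^{-sx}+1-e^{-s[x]^{-}}$. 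This expresses $\Phi_{W,j}^{n+1}(s)$ as $p$ times a ``$V_n=1$'' contribution plus $q$ times a ``$V_n=-1$'' contribution, each splitting into a drift term $E(e^{-s(V_nW_n+Y_n(V_n))}1_{\{Z_{n+1}=j,Z_n=i\}})$, a constant term $p_{i,j}P(Z_n=i)$, and a reflection term $-E(e^{-s[V_nW_n+Y_n(V_n)]^{-}}1_{\{Z_{n+1}=j,Z_n=i\}})$, using throughout the conditional independence in \eqref{mp1}, \eqref{mp2}.

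The key step is evaluating the reflection terms by memorylessness. For $V_n=1$ the reflection is nonzero on $\{A_{n+1}>W_n+S_n\}$, where $A_{n+1}-(W_n+S_n)\sim\exp(\lambda_j)$ is independent of $(W_n,S_n)$ by \eqref{mp1}; this makes that term equal $p_{i,j}\big[P(Z_n=i)+\frac{s}{\lambda_j-s}\Phi_{W,i}^{n}(\lambda_j)\beta_i^{*}(\lambda_j)\big]$, whose constant summand cancels $p_{i,j}P(Z_n=i)$, while the drift term equals $p_{i,j}\Phi_{W,i}^{n}(s)\beta_i^{*}(s)\frac{\lambda_j}{\lambda_j-s}$. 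For $V_n=-1$ the recursion is $W_{n+1}=[-W_n+D_{n+1}-C_n]^{+}$, so $-W_n$ enters with a plus sign; an analogous memoryless computation (the reflection is now nonzero on $\{D_{n+1}<W_n+C_n\}$, with $D_{n+1}\sim\exp(\mu_j)$) yields a drift term $p_{i,j}\Phi_{W,i}^{n}(-s)c_i^{*}(-s)\frac{\mu_j}{\mu_j+s}$ together with a reflection term $-p_{i,j}\big[\frac{\mu_j}{\mu_j+s}\Phi_{W,i}^{n}(-s)c_i^{*}(-s)+\frac{s}{s+\mu_j}\Phi_{W,i}^{n}(\mu_j)c_i^{*}(\mu_j)\big]$, so the $\Phi_{W,i}^{n}(-s)c_i^{*}(-s)$ pieces (meaningful only on $Re(s)=0$, since $-W_n$ appears) cancel one another and $p_{i,j}P(Z_n=i)$ survives. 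Verifying these two cancellations is the main obstacle, because it is exactly what guarantees that the surviving identity lives on $Re(s)\geq 0$ and involves only the transforms $\Phi_{W,i}(s)$ together with their values at the fixed points $\lambda_j,\mu_j$.

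Summing over $i$ and letting $n\to\infty$ — legitimate by the stability discussion above, so that $\Phi_{W,i}^{n}(\cdot)\to\Phi_{W,i}(\cdot)$ and $P(Z_n=i)\to\pi_i$, together with $\sum_i p_{i,j}\pi_i=\pi_j$ — and setting $v_j^{(1)}=p\sum_i p_{i,j}\beta_i^{*}(\lambda_j)\Phi_{W,i}(\lambda_j)$, $v_j^{(-1)}=q[\pi_j-\sum_i p_{i,j}c_i^{*}(\mu_j)\Phi_{W,i}(\mu_j)]$, I would obtain
\[
\Phi_{W,j}(s)=\frac{p\lambda_j}{\lambda_j-s}\sum_{i=1}^{N}p_{i,j}\beta_i^{*}(s)\Phi_{W,i}(s)-\frac{s}{\lambda_j-s}v_j^{(1)}+q\pi_j+\frac{s}{s+\mu_j}\big(v_j^{(-1)}-q\pi_j\big),\quad j=1,\ldots,N.
\]
Multiplying by $\lambda_j-s$, moving the $\Phi$-terms to the left-hand side, and recognising $p\lambda_j\sum_i p_{i,j}\beta_i^{*}(s)\Phi_{W,i}(s)$ as the $j$-th entry of $\tilde{\Phi}_{W}^{T}(s)H(s)$ with $H(s)=pB^{*}(s)P\Lambda$, the $N$ scalar relations assemble into $\tilde{\Phi}_{W}^{T}(s)[H(s)+sI-\Lambda]=\tilde{v}(s)$; collecting the right-hand side over the common denominator $\mu_j+s$ and grouping powers of $s$ produces the numerator $s\,k_{j,1}+s^{2}k_{j,2}-q\pi_j\lambda_j\mu_j$ with $k_{j,1}=q\pi_j\mu_j+\mu_j v_j^{(1)}-\lambda_j v_j^{(-1)}$ and $k_{j,2}=v_j^{(1)}+v_j^{(-1)}$, i.e.\ $\tilde{v}(s)=(s\tilde{k}_{1}+s^{2}\tilde{k}_{2}-q\tilde{\pi}\Lambda M)(M+sI)^{-1}$. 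The equality $\tilde{\Phi}_{W}^{T}(0)=\tilde{\pi}^{T}$ follows at once from $\Phi_{W,j}(0)=\lim_n P(Z_n=j)=\pi_j$ and is consistent with \eqref{fun} at $s=0$. Finally, although the balance relation is derived on $Re(s)=0$, both sides of \eqref{fun} are analytic in $Re(s)>0$ and continuous on $Re(s)\geq 0$ — the apparent singularities at $s=\lambda_j$ being removable exactly because of the definition of $v_j^{(1)}$ — so \eqref{fun} extends to all $Re(s)\geq 0$ by analytic continuation.
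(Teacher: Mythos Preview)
Your proposal is correct and follows essentially the same route as the paper: you condition on $V_n\in\{1,-1\}$ and $Z_n=i$, exploit the exponential memorylessness of $A_{n+1}$ and $D_{n+1}$ to evaluate the two conditional expectations $E(e^{-s[W_n+S_n-A_{n+1}]^+}1_{\{Z_{n+1}=j\}})$ and $E(e^{-s[D_{n+1}-C_n-W_n]^+}1_{\{Z_{n+1}=j\}})$, let $n\to\infty$, and assemble the resulting scalar identities into the matrix equation \eqref{fun}. The only difference is that the paper states these two expectations directly by appeal to \cite{adan2,vlasioudep}, whereas you derive them via the identity $e^{-s[x]^+}=e^{-sx}+1-e^{-s[x]^-}$ and verify the cancellation of the $\Phi_{W,i}(-s)c_i^*(-s)$ terms explicitly; the end computation (your displayed equation for $\Phi_{W,j}(s)$, multiplied by $\lambda_j-s$) is precisely the paper's equation \eqref{q0}.
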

\begin{proof}
    The transform $\Phi_{W,j}^{n+1}(s):=E(e^{-sW_{n+1}}1_{\{Z_{n+1}=j\}})$ satisfies
    \begin{equation}
        \begin{array}{l}
            E(e^{-sW_{n+1}}1_{\{Z_{n+1}=j\}})\vspace{2mm}\\= pE(e^{-s[S_{n}-A_{n+1}+W_{n}]^{+}}1_{\{Z_{n+1}=j\}})+qE(e^{-s[D_{n+1}-C_{n}-W_{n}]^{+}}1_{\{Z_{n+1}=j\}}).
        \end{array}\label{eq1}
    \end{equation}
    Using standard arguments and following \cite{adan2,vlasioudep} we have
    \begin{displaymath}
        \begin{array}{rl}
            E(e^{-s[S_{n}-A_{n+1}+W_{n}]^{+}}1_{\{Z_{n+1}=j\}})= &\frac{\lambda_{j}}{\lambda_{j}-s}\sum_{i=1}^{N}p_{i,j} [\Phi_{W,i}^{n}(s)\beta_{i}^{*}(s)-\frac{s}{\lambda_{j}-s}\Phi_{W,i}^{n}(\lambda_{j})\beta_{i}^{*}(\lambda_{j})],\vspace{2mm} \\
             E(e^{-s[D_{n+1}-C_{n}-W_{n}]^{+}}1_{\{Z_{n+1}=j\}})=&\sum_{i=1}^{N}p_{i,j}[P(Z_{n}=i)-\frac{s}{\mu_{j}+s}\Phi_{W,i}^{n}(\mu_{j})c_{i}^{*}(\mu_{j})].
        \end{array}
    \end{displaymath}
    Substituting back in \eqref{eq1}, and letting $n\to\infty$ so that $\lim_{n\to\infty}\Phi_{W,i}^{n}(s)=\Phi_{W,i}(s)$, we obtain after some algebra
    \begin{equation}
             \lambda_{j}p\sum_{i=1}^{N}p_{i,j} \Phi_{W,i}(s)\beta_{i}^{*}(s)-(\lambda_{j}-s)\Phi_{W,j}(s)=\frac{1}{\mu_{j}+s}\left[s^{2}k_{j,2}+sk_{j,1}-q\pi_{j}\lambda_{j}\mu_{j}\right].             
        \label{q0}
    \end{equation}
       In matrix form equation \eqref{q0} yields \eqref{fun}. Moreover, $\Phi_{W,j}^{n+1}(0)=E(1_{\{Z_{n+1}=j\}})$, and as $n\to\infty$, $\Phi_{W,j}^{n}(0)\to\pi_{j}$, thus, $\tilde{\Phi}^{T}_{W}(0)=\tilde{\pi}^{T}$. This completes the proof.
\end{proof}

Note that in order to fully specify $\tilde{\Phi}^{T}_{W}(s)$ we first need to obtain $2N^{2}$ unknown terms, namely $\Phi_{W,i}(\lambda_{j})$, $\Phi_{W,i}(\mu_{j})$, $i,j=1,\ldots,N$, which are involved in the vectors $\tilde{k}_{1}$, $\tilde{k}_{2}$. This task is accomplished by performing the following steps. 
\begin{enumerate}
    \item First, the following theorem gives the number and placement of the solutions of $det(H(s)+sI-\Lambda)=0$.
\begin{theorem}\label{theo}
    The equation
    \begin{equation}
    det(H(s)+sI-\Lambda)=0,
\label{eig}
    \end{equation}
    has exactly $N$ zeros, say $s_{1},\ldots,s_{N}$, with $Re(s_{i})>0$, $i=1,\ldots,N$.
\end{theorem}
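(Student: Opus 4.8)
The plan is to count the zeros of $g(s):=\det(H(s)+sI-\Lambda)$ in the open right half-plane by the argument principle, after an algebraic factorization that pulls out the elementary polynomial $\prod_{j=1}^{N}(s-\lambda_j)$. Writing $M(s):=H(s)+sI-\Lambda=(sI-\Lambda)+H(s)$, the first step is the factorization
\begin{equation*}
  M(s)=\bigl[I+H(s)(sI-\Lambda)^{-1}\bigr]\,(sI-\Lambda),\qquad s\neq\lambda_1,\dots,\lambda_N ,
\end{equation*}
so that $g(s)=\det\!\bigl(I+H(s)(sI-\Lambda)^{-1}\bigr)\prod_{j=1}^{N}(s-\lambda_j)$. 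Since $g$ is analytic for $\mathrm{Re}(s)>0$ and continuous on $\mathrm{Re}(s)\ge 0$ (each $\beta_i^{*}$ is the LST of a proper distribution) and $\lambda_j>0$, everything reduces to controlling the first determinant.

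The heart of the argument is a norm estimate. Using $H(s)=pB^{*}(s)P\Lambda$ one gets $\bigl(H(s)(sI-\Lambda)^{-1}\bigr)_{ij}=p\,\lambda_j\,p_{ij}\,\beta_i^{*}(s)/(s-\lambda_j)$, hence, in the maximum–row–sum norm,
\begin{equation*}
  \bigl\|H(s)(sI-\Lambda)^{-1}\bigr\|_{\infty}=\max_{1\le i\le N}\ |\beta_i^{*}(s)|\ p\sum_{j=1}^{N}\frac{\lambda_j\,p_{ij}}{|s-\lambda_j|}.
\end{equation*}
On the imaginary axis $|s-\lambda_j|\ge\lambda_j$, $|\beta_i^{*}(s)|\le 1$ and $\sum_j p_{ij}=1$, so the right-hand side is at most $p<1$; on the semicircle $|s|=R$, $\mathrm{Re}(s)\ge0$, it is at most $p\lambda_{\max}/(R-\lambda_{\max})$ with $\lambda_{\max}=\max_j\lambda_j$, hence $<1$ once $R$ is large. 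Fix such an $R$ (also $R>\lambda_{\max}$). Then $I+H(s)(sI-\Lambda)^{-1}$ is invertible on the whole boundary $\Gamma:=\partial D_R$ of the half-disc $D_R=\{|s|<R,\ \mathrm{Re}(s)>0\}$, and $\prod_j(s-\lambda_j)\neq0$ on $\Gamma$, so $g$ does not vanish on $\Gamma$; moreover $g(s)\neq0$ for $\mathrm{Re}(s)>0$, $|s|\ge R$, and for $\mathrm{Re}(s)=0$, so every zero of $g$ in $\mathrm{Re}(s)\ge0$ lies in $D_R$.

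By the argument principle the number of zeros of $g$ in $D_R$ (with multiplicity) equals $\tfrac{1}{2\pi}\Delta_{\Gamma}\arg g(s)$. Splitting along the factorization, $\Delta_{\Gamma}\arg\prod_j(s-\lambda_j)=\sum_j\Delta_\Gamma\arg(s-\lambda_j)=2\pi N$, since every $\lambda_j\in D_R$. For the remaining factor I would use the homotopy $h_t(s):=\det\!\bigl(I+tH(s)(sI-\Lambda)^{-1}\bigr)$, $t\in[0,1]$: the norm bound above holds with $t\,H$ in place of $H$ for every $t\in[0,1]$, so $h_t$ is continuous and nonvanishing on $[0,1]\times\Gamma$, whence $\tfrac{1}{2\pi}\Delta_\Gamma\arg h_t$ is integer valued and continuous in $t$, thus constant, equal to its value $0$ at $t=0$ (where $h_0\equiv1$). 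Therefore $\Delta_\Gamma\arg g=2\pi N$, i.e. $g$ has exactly $N$ zeros in $D_R$, and by the previous paragraph these are precisely the zeros of $g$ in $\mathrm{Re}(s)>0$. One small technical point: $\beta_i^{*}$ is only guaranteed analytic in the open half-plane, so one runs the argument principle on the shifted contour $\partial D_R\cap\{\mathrm{Re}(s)\ge\varepsilon\}$, where all the above estimates persist for $\varepsilon$ small (precisely because $p<1$), and lets $\varepsilon\downarrow0$.

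I expect the main obstacle to be obtaining the norm estimate with a strict bound below $1$. This dictates the one-sided placement of $(sI-\Lambda)^{-1}$ to the \emph{right} of $H(s)$: the left placement produces factors $\lambda_j/\lambda_i$, which need not be $\le1$, and the estimate fails. It is exactly here that the stochasticity $\sum_j p_{ij}=1$ together with $p<1$ and $|\beta_i^{*}(s)|\le1$ on the imaginary axis do the work; everything after that is a routine winding-number computation (and this is the same mechanism behind the analogous statement used for Model~I).
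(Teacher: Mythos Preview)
Your proof is correct and follows essentially the same strategy as the paper's: both deform $H(s)$ to $0$ via a homotopy parameter $t\in[0,1]$, show the determinant does not vanish on a suitable contour throughout the deformation, count $N$ zeros at $t=0$ (namely the $\lambda_j$), and conclude by continuity of the winding number. The paper works on a circle $C_\epsilon$ centered at $\hat\lambda=\max_j\lambda_j$ with radius $\hat\lambda+\epsilon$ and establishes nonvanishing via diagonal dominance of $uH(s)+sI-\Lambda$; you work on a half-disc $D_R$ bounded by the imaginary axis and a large semicircle, factor out $(sI-\Lambda)$, and use the row-sum norm bound $\|H(s)(sI-\Lambda)^{-1}\|_\infty<1$. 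These are equivalent mechanisms (your norm bound is precisely the diagonal-dominance inequality rewritten), but your choice of contour buys something: the paper's circle dips into $\mathrm{Re}(s)<0$ and therefore needs the extra hypothesis that each $\beta_i^{*}$ extends analytically to $\mathrm{Re}(s)>-\zeta$, whereas your half-disc (with the $\varepsilon$-shift) requires only analyticity in the open right half-plane and continuity up to the boundary, which is automatic for an LST. Your remark about placing $(sI-\Lambda)^{-1}$ to the right of $H(s)$ is well taken and is exactly what makes the row sums collapse to $p\sum_j p_{ij}=p$.
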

\begin{proof}
   See Appendix \ref{roots}.
\end{proof}

The above result helps us to provide $N$ equations that will be helpful for determining the vectors $\tilde{k}_{1}$, $\tilde{k}_{2}$:
\begin{theorem}
    Suppose that the $N$ solutions $s_i$, $i=1,2,\ldots,N,$ with $Re(s_i) > 0$ to equation \eqref{eig} are distinct. Let $\tilde{a}_i=(a_{i,1},\ldots,a_{i,N})^{T}$ be a nonzero
column vector satisfying
\begin{equation*}
    (H(s_{i})+s_{i}I-\Lambda)\tilde{a}_{i}=\tilde{0},\,i=1,\ldots,N,\label{eig2}
\end{equation*}
where $\tilde{0}$ is a row vector of zeros. Then, for $i=1,\ldots,N$, $\tilde{v}(s_{i})\tilde{a}_{i}=0$, or equivalently,
\begin{equation}
        \sum_{k=1}^{N}v_{k}(s_{i})a_{i,k}=0,
   \label{n1}
\end{equation}
where $\tilde{v}(s)=(v_{1}(s),\ldots,v_{N}(s))$ with
\begin{displaymath}
    v_{j}(s)=\frac{1}{\mu_{j}+s}[-q\pi_{j}\lambda_{j}\mu_{j}+sk_{j,1}+s^{2}k_{j,2}],\,j=1,\ldots,N.
\end{displaymath}
\end{theorem}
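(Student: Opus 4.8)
The plan is to obtain \eqref{n1} by contracting the matrix functional equation \eqref{fun} of Theorem \ref{th1} against a right null vector of $H(s_i)+s_iI-\Lambda$. The whole statement is a one–line consequence of \eqref{fun} once one is entitled to substitute $s=s_i$ into it, so the only real work is to make that substitution legitimate.

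First I would record that \eqref{fun}, equivalently the scalar identity \eqref{q0}, holds on the whole closed half–plane $Re(s)\ge 0$, which is where the zeros $s_1,\dots,s_N$ live. This can be read off the derivation in Theorem \ref{th1} (which never actually restricts to the imaginary axis), or argued by analytic continuation: each $\Phi_{W,i}(s)=\lim_n E(e^{-sW_n}1_{\{Z_n=i\}})$ satisfies $|\Phi_{W,i}(s)|\le\pi_i$ for $Re(s)\ge 0$ since $W_n\ge 0$, and is analytic on $Re(s)>0$, and the same holds for $\beta_i^*(s)$; hence the left-hand side of \eqref{q0} is analytic on $Re(s)>0$, continuous up to the boundary, and $O(|s|)$ there, while the right-hand side is rational with only pole $s=-\mu_j$, hence analytic on $Re(s)\ge 0$ and also $O(|s|)$. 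Since the two sides agree on $Re(s)=0$, a Phragm\'en--Lindel\"of argument for the half–plane forces them to agree on all of $Re(s)\ge 0$. Because $Re(s_i)>0$ by Theorem \ref{theo}, $\tilde\Phi_W(s_i)$ is in any case a genuine finite value, and if some $s_i$ coincides with a $\lambda_j$ the substitution is still fine, the $\lambda_j$-singularity having already been cleared in passing to the form \eqref{q0}.

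Granting this, fix $i$ and evaluate \eqref{fun} at $s=s_i$, giving $\tilde\Phi_W^T(s_i)\,[H(s_i)+s_iI-\Lambda]=\tilde v(s_i)$. Post-multiplying by the column vector $\tilde a_i$ and using the defining relation $[H(s_i)+s_iI-\Lambda]\tilde a_i=\tilde 0$ annihilates the left-hand side, so $\tilde v(s_i)\tilde a_i=0$. Since $(M+sI)^{-1}$ is the diagonal matrix with entries $(\mu_j+s)^{-1}$, the $j$-th component of $\tilde v(s)=(s\tilde k_1+s^2\tilde k_2-q\tilde\pi\Lambda M)(M+sI)^{-1}$ is exactly $v_j(s)=\frac{1}{\mu_j+s}[-q\pi_j\lambda_j\mu_j+sk_{j,1}+s^2k_{j,2}]$, and therefore $\tilde v(s_i)\tilde a_i=0$ is the scalar relation $\sum_{k=1}^{N}v_k(s_i)a_{i,k}=0$, i.e.\ \eqref{n1}.

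The main, and essentially the only, obstacle is this justification that \eqref{fun} may be evaluated at the half–plane zeros $s_i$ — one must be sure $\tilde\Phi_W(s_i)$ is finite and that the equation genuinely holds off the line $Re(s)=0$. Note that the distinctness of the $s_i$ is not used in deriving \eqref{n1} itself; it is invoked only afterwards, to guarantee that each null vector $\tilde a_i$ is determined up to a scalar and that \eqref{n1} delivers the intended $N$ ``independent'' equations toward the $2N^2$ unknowns $\Phi_{W,i}(\lambda_j),\Phi_{W,i}(\mu_j)$.
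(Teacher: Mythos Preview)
Your argument is correct and follows essentially the same route as the paper: evaluate \eqref{fun} at $s=s_i$ and post-multiply by the null vector $\tilde a_i$ to kill the left-hand side. The paper's proof is in fact just that one line, without your extra justification that \eqref{fun} is valid at $s=s_i$ (which is a reasonable point to raise, though the derivation in Theorem~\ref{th1} already works for $Re(s)\ge 0$).
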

\begin{proof}
    Since $s_i$ satisfies equation \eqref{eig}, then there is a nonzero column vector $\tilde{a}_{i}$ such that 
    \begin{equation}
    (H(s_{i})+s_{i}I-\Lambda)\tilde{a}_{i}=\tilde{0},\,i=1,\ldots,N.\label{eig20}
\end{equation}
Post-multiplying \eqref{fun} for $s=s_{i}$ with $\tilde{a}_{i}$, we get \eqref{n1}, which provides $N$ linear equations that relate the elements of $\tilde{k}_{1}$, $\tilde{k}_{2}$.
\end{proof}

\item Substituting $s=\mu_{l}$, for each $l$, $l=1,\ldots,N$ in \eqref{q0}, we obtain $N^{2}$ equations that relate $\Phi_{W,i}(\lambda_{j})$, $\Phi_{W,i}(\mu_{j})$, $i,j=1,\ldots,N$. More precisely, substituting $s=\mu_{l}$, $l=1,\ldots,N$ in \eqref{q0} we have for $j,l=1,\ldots,N$:
\begin{equation}
    \begin{array}{l}
        \lambda_{j}p\sum_{i=1}^{N}p_{i,j} \Phi_{W,i}(\mu_{l})\beta_{i}^{*}(\mu_{l})-(\lambda_{j}-\mu_{l})\Phi_{W,j}(\mu_{l})\vspace{2mm}\\=\frac{1}{\mu_{j}+\mu_{l}}\left[\mu_{l}(\mu_{l}+\mu_{j})p\sum_{i=1}^{N}p_{i,j} \Phi_{W,i}(\lambda_{j})\beta_{i}^{*}(\lambda_{j})\right.\vspace{2mm}\\
        \left.+\mu_{l}q(\lambda_{j}-\mu_{l})\sum_{i=1}^{N}p_{i,j} \Phi_{W,i}(\mu_{j})c_{i}^{*}(\mu_{j})+(\mu_{l}+\mu_{j})q\pi_{j}(\mu_{l}-\mu_{j})\right]. 
    \end{array}\label{q2}
\end{equation}
\item 
We also need $N(N-1)$ additional equations. These are obtained from \eqref{q0} as follows: For each $j=1,\ldots,N$, substitute in \eqref{q0} $s=\lambda_{k}$, $k\neq j$, $k=1,\ldots,N$. Then, we have for $j=1,\ldots,N$, $k\neq j$:
\begin{equation}
    \begin{array}{l}
        \lambda_{j}p\sum_{i=1}^{N}p_{i,j} \Phi_{W,i}(\lambda_{k})\beta_{i}^{*}(\lambda_{k})-(\lambda_{j}-\lambda_{k})\Phi_{W,j}(\lambda_{k})\vspace{2mm}\\=\frac{1}{\mu_{j}+\lambda_{k}}\left[\lambda_{k}(\lambda_{k}+\mu_{j})p\sum_{i=1}^{N}p_{i,j} \Phi_{W,i}(\lambda_{j})\beta_{i}^{*}(\lambda_{j})\right.\vspace{2mm}\\
        \left.+\lambda_{k}q(\lambda_{k}-\lambda_{j})\left(\sum_{i=1}^{N}p_{i,j} \Phi_{W,i}(\mu_{j})c_{i}^{*}(\mu_{j})+(\lambda_{k}+\mu_{j})\pi_{j}\right)\right]. 
    \end{array}\label{q3}
\end{equation}
\end{enumerate}

Then, \eqref{n1}, \eqref{q2}, \eqref{q3} constitute $2N^{2}$ linear equations for the $2N^{2}$ unknowns $\Phi_{W,i}(\lambda_{j})$, $\Phi_{W,i}(\mu_{j})$, $i,j=1,\ldots,N$. Once $\Phi_{W,i}(\lambda_{j})$, $\Phi_{W,i}(\mu_{j})$, $i,j=1,\ldots,N$ are known, the vectors $\tilde{k}_{1}$, $\tilde{k}_{2}$ are known, and thus, $\tilde{v}(s)$ is known, which in turn implies that $\tilde{\Phi}^{T}_{W}(s)$ is known. 
\subsection{Steady-state moments}
Once $\tilde{k}_{1}$, $\tilde{k}_{2}$ are known, the entire transform vector $\tilde{\Phi}^{T}_{W}(s)$ is known, and ready to be used to compute
the moments of the steady-state waiting time distribution. They are given in the following theorem. Denote first by:
\begin{displaymath}
    \begin{array}{rl}
m_{r,i}=\lim_{n\to\infty}E(W_{n}^{r}1_{\{Z_{n}=i\}}),&r=0,1,2,\ldots,\,i=1,\ldots,N,  \vspace{2mm}\\
         \tilde{m}_{r}=(m_{r,1},\ldots,m_{r,N}),& r=0,1,2,\ldots,\vspace{2mm}\\
         \gamma_{r,i}=\int_{0}^{\infty}x^{r}dB_{i}(x),&r=0,1,2,\ldots,\,i=1,\ldots,N,\vspace{2mm}\\
         \Gamma_{r}=diag(\gamma_{r,1},\ldots,\gamma_{r,N}),& r=0,1,2,\ldots.\\
    \end{array}
\end{displaymath}
Note that $\Gamma_{0}=I$ and $\Gamma_{1}=\Gamma$. Then, we have the following result.
\begin{theorem}
    The moment-vectors $\tilde{m}_{r}$ satisfy the following recursive equations: $\tilde{m}_{0}=\tilde{\pi}$,
    \begin{equation}
        \begin{array}{rl}
             \tilde{m}_{1}=&\left[ \tilde{m}_{0}\left(p(P\Lambda-\Gamma_{1}P\Lambda M)+M-\Lambda\right)-\tilde{k}_{1}\right](\Lambda M)^{-1}(pP-I)^{-1},\vspace{2mm}\\
             \tilde{m}_{2}=&2\left[ \tilde{k}_{2}-\tilde{m}_{1}\left(p(P\Lambda-\Gamma_{1}P\Lambda M)+M-\Lambda\right)\right.\vspace{2mm}\\
             &\left. \tilde{m}_{0}\left(p(\frac{1}{2}\Gamma_{2}P\Lambda M-\Gamma_{1}P\Lambda)+I\right)\right](\Lambda M)^{-1}(pP-I)^{-1},
             \end{array}\label{j1}
             \end{equation}
             while for $r\geq 3$:
             \begin{equation}
                 \begin{array}{l}
             \frac{(-1)^{r}}{r!}\tilde{m}_{r}(I-pP)\Lambda M=
             \frac{(-1)^{r-2}}{(r-2)!}\tilde{m}_{r-2}\left[p(\frac{1}{2}\Gamma_{2}P\Lambda M-\Gamma_{1}P\Lambda)+I\right]\vspace{2mm}\\
             +\frac{(-1)^{r-1}}{(r-1)!}\tilde{m}_{r-2}\left[p(P\Lambda-\Gamma_{1}P\Lambda M)+M-\Lambda\right]\vspace{2mm}\\
             +p\sum_{j=0}^{r-3}\frac{(-1)^{j}}{j!}\tilde{m}_{j}\left[\frac{(-1)^{r-j-1}}{(r-j-1)!}\Gamma_{r-j-1} P\Lambda+\frac{(-1)^{r-j}}{(r-j)!}\Gamma_{r-j}P\Lambda M \right].
        \end{array}\label{j2}
    \end{equation}
\end{theorem}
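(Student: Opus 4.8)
The plan is to obtain \eqref{j1}--\eqref{j2} directly from the functional equation \eqref{fun} (equivalently its scalar form \eqref{q0}) by expanding in powers of $s$ about the origin. First I would clear the only rational factor: multiplying \eqref{fun} on the right by $(M+sI)$ and using $H(s)=pB^{*}(s)P\Lambda$ turns it into the identity
\[
\tilde{\Phi}_{W}^{T}(s)\bigl[\,pB^{*}(s)P\Lambda M+s\,pB^{*}(s)P\Lambda+s^{2}I+s(M-\Lambda)-\Lambda M\,\bigr]=s\tilde{k}_{1}+s^{2}\tilde{k}_{2}-q\tilde{\pi}\Lambda M ,
\]
valid in a neighbourhood of $s=0$. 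This is legitimate because every function appearing is analytic at the origin: the standing finiteness assumptions (all $\gamma_{r,i}$ finite, and $A$, $D$ exponential hence light-tailed) together with the tail domination $P(W>x)\le P(Q>x)$ from the stability subsection ensure that $W$ has finite moments of every order, so $\Phi_{W,i}(s)=E(e^{-sW}1_{\{Z=i\}})$ --- and likewise each $\beta_{i}^{*}(s)$ --- is analytic near $0$.

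Next I would substitute the Taylor expansions $\tilde{\Phi}_{W}^{T}(s)=\sum_{r\ge0}\frac{(-s)^{r}}{r!}\tilde{m}_{r}$ and $B^{*}(s)=\sum_{r\ge0}\frac{(-s)^{r}}{r!}\Gamma_{r}$ (valid since the $r$-th derivative at $0$ of $\Phi_{W,i}$ is $(-1)^{r}m_{r,i}$ and of $\beta_{i}^{*}$ is $(-1)^{r}\gamma_{r,i}$; recall $\Gamma_{0}=I$, and $\tilde{m}_{0}=\tilde{\pi}$ from Theorem~\ref{th1}) and equate the coefficient of $s^{r}$ on both sides. For $r=0$ one gets $\tilde{m}_{0}(pP-I)\Lambda M=-q\tilde{\pi}\Lambda M$, which holds automatically since $\tilde{\pi}^{T}P=\tilde{\pi}^{T}$ and $p+q=1$; this is the consistency check behind $\tilde{m}_{0}=\tilde{\pi}$. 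For $r=1$ and $r=2$ the right-hand side still contributes $s\tilde{k}_{1}$, respectively $s^{2}\tilde{k}_{2}$; isolating in the Cauchy product the unique $\tilde{m}_{r}$-block --- which, up to sign normalisation, is $\tfrac{(-1)^{r}}{r!}\tilde{m}_{r}(I-pP)\Lambda M$, arising from the $\Gamma_{0}$ term of $B^{*}(s)$ against $P\Lambda M$ together with the $-\Lambda M$ term --- and moving the rest to the other side, then right-multiplying by $(\Lambda M)^{-1}(pP-I)^{-1}$, yields exactly \eqref{j1}. For $r\ge3$ the right-hand side vanishes, and the same separation --- first the $\tilde{m}_{r}$-block, then the $\tilde{m}_{r-1}$ contribution (from $\Gamma_{1}$ against $P\Lambda M$, from $\Gamma_{0}$ against $s\,P\Lambda$, and from the $s(M-\Lambda)$ term), then the $\tilde{m}_{r-2}$ contribution (from $\Gamma_{2}$ against $P\Lambda M$, from $\Gamma_{1}$ against $s\,P\Lambda$, and from the $s^{2}I$ term), leaving the residual double sum over $j=0,\ldots,r-3$ --- reproduces \eqref{j2}.

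For the recursions to actually \emph{determine} the $\tilde{m}_{r}$ one needs $(I-pP)\Lambda M$ nonsingular: $\Lambda$ and $M$ are diagonal with strictly positive entries, and $I-pP$ is invertible since $pP$ has spectral radius $p<1$ ($P$ being stochastic), so $(\Lambda M)^{-1}(pP-I)^{-1}$ in \eqref{j1}--\eqref{j2} is well defined. Then \eqref{j2} recursively produces $\tilde{m}_{r}$ for every $r\ge3$ once $\tilde{m}_{0}=\tilde{\pi}$ and $\tilde{m}_{1},\tilde{m}_{2}$ are in hand, and the latter come from \eqref{j1} as soon as $\tilde{k}_{1},\tilde{k}_{2}$ --- hence the $2N^{2}$ boundary values $\Phi_{W,i}(\lambda_{j}),\Phi_{W,i}(\mu_{j})$ --- have been computed from the linear system \eqref{n1}, \eqref{q2}, \eqref{q3}.

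I expect the only genuine obstacle to be the bookkeeping of the triple Cauchy product of $\tilde{\Phi}_{W}^{T}(s)$, of $B^{*}(s)$, and of the quadratic matrix polynomial $pB^{*}(s)(P\Lambda M+sP\Lambda)+s^{2}I+s(M-\Lambda)-\Lambda M$, i.e. extracting the $\tilde{m}_{r}$, $\tilde{m}_{r-1}$, $\tilde{m}_{r-2}$ blocks with the correct factorials and signs before writing the residual sum $\sum_{j=0}^{r-3}$; there is no conceptual difficulty beyond this and the invertibility/analyticity remarks above.
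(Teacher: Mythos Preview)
Your proposal is correct and follows essentially the same approach as the paper: multiply the functional equation \eqref{fun} on the right by $(M+sI)$, substitute the Taylor expansions of $\tilde{\Phi}_{W}^{T}(s)$ and $B^{*}(s)$, and equate coefficients of $s^{r}$ for $r=0,1,2$ and then $r\ge 3$. Your write-up is in fact somewhat more careful than the paper's, supplying the analyticity justification and the spectral-radius argument for the invertibility of $I-pP$ that the paper only asserts.
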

\begin{proof}
    We have that $\tilde{\Phi}^{T}_{W}(s)=\sum_{r=0}^{\infty}(-1)^{r}\tilde{m}_{r}\frac{s^{r}}{r!}$, $B^{*}(s)=\sum_{r=0}^{\infty}(-1)^{r}\Gamma_{r}\frac{s^{r}}{r!}$,
    thus,
    \begin{displaymath}\begin{array}{c}
        H(s)=p\sum_{r=0}^{\infty}(-1)^{r}\Gamma_{r}P\Lambda\frac{s^{r}}{r!}.\end{array}
    \end{displaymath}
   The equation in \eqref{fun} is now written as 
    \begin{displaymath}\begin{array}{c}
        \tilde{\Phi}_{W}^{T}(s)[H(s)+sI-\Lambda](M+sI)=s\tilde{k}_{1}+s^{2}\tilde{k}_{2}-q\tilde{\pi}\Lambda M,\end{array}
    \end{displaymath}
    or equivalently
    \begin{displaymath}
    \begin{array}{l}
        \sum_{r=0}^{\infty}(-1)^{r}\tilde{m}_{r}\frac{s^{r}}{r!}\left[p\sum_{r=0}^{\infty}(-1)^{r}\Gamma_{r}P\Lambda(M\frac{s^{r}}{r!}+\frac{s^{r+1}}{r!})+s(M-\Lambda)+s^{2}I-\Lambda M\right] \vspace{2mm}\\
        =s\tilde{k}_{1}+s^{2}\tilde{k}_{2}-q\tilde{\pi}\Lambda M.
    \end{array} 
    \end{displaymath}
    Equating the coefficients of $s^{0}$ we get $\tilde{m}_{0}(pP-I)=-q\tilde{\pi}$, which is true since $\tilde{m}_{0}=\tilde{\pi}$. Next, equating the coefficients of $s^1$, we get the expression for $\tilde{m}_{1}$ (note that since $p\in(0,1)$, $pP-I$ is invertible). We get that by equating the
coefficients of $s^2$, $\tilde{m}_{2}$ is given in terms of $\tilde{m}_{1}$, $\tilde{m}_{0}=\tilde{\pi}$, as given in \eqref{j1}. Proceeding similarly, equating coefficients of $s^{r}$, $r\geq 3$ we get \eqref{j2}.
\end{proof}
\subsection{Asymptotic behaviour}
Let $G(s):=H(s)+sI-\Lambda$, and $Adj(G(s))$ denotes its adjoint matrix. From Theorem \ref{th1} we know that
\begin{equation}
     \tilde{\Phi}^{T}_{W}(s)=\tilde{v}(s)\frac{Adj(G(s))}{det(G(s))},
    \label{we1}
\end{equation}
where $G(s)$, is a vector of analytic functions for $Re(s) > 0$. Assume that the LST $\beta_{i}^{*}(s)$ exists in a neighborhood of the origin. Then, due to \eqref{we1}, the functions $\Phi_{W,i}(s)$ are analytic for all $s$ with $Re(s) > -R$, where $-R$ denotes the zero of $det (G(s))$ with the largest real part in the negative halfplane. From the damping property of the Laplace transforms, we have $\mathcal{L}(e^{Rt}F_{W,i}(t)) = \Phi_{W,i}(s - R)$, $i=1,\ldots,N,$ where $F_{W,i}(t)$ denotes the distribution of $W$ when the background Markov chain is in state $i$, and $\mathcal{L}(g(t))$ denotes the LST of the function $g(t)$. Therefore, given that the limit exists,
\begin{displaymath}
    \lim_{t\to\infty}e^{Rt}\tilde{F}^{T}_{W}(t)=\lim_{s\to 0}s\tilde{\Phi}^{T}_{W}(s-R)=\tilde{C},
\end{displaymath}
where $\tilde{F}^{T}_{W}(t)=(F_{w,1}(t),\ldots,F_{W,N}(t))$. For convenience, let $s = -R$ be a simple pole of $\Phi_{W,i}(s)$, then we obtain, using de L’Hopital rule:
\begin{equation}
     \tilde{C}=\frac{\tilde{v}(-R)Adj(G(-R))}{\frac{d}{ds}det(G(s))|_{s=-R}}.\label{nmk}
\end{equation}
Therefore, the waiting time distribution decays exponentially at rate $R$, and the corresponding constants are given in \eqref{nmk}.

\section{A simple numerical example}\label{num}
We now numerically illustrate the theoretical findings in Sections \ref{mod1}, \ref{mod2}. Let $N=2$, and $\Lambda=diag(\lambda_{1},\lambda_{2})=diag(2,3)$, $M=diag(\mu_{1}=10/u,\mu_{2}=8/u)$, $A|Z=j\sim exp(\lambda_{j})$. 
\paragraph{Regarding Model I} Here we assume that $a=0.2$, $S|Z=j\sim exp(\mu_{j})$. 

Note that in order to obtain numerical results we need to first obtain the unknowns $c_{\omega,w}$, $\omega=1,2,\ldots,l_{j}+\sum_{k=1}^{N}m_{k}$, $j=1,\ldots,N$, as stated in Theorem \ref{theo1}. In doing this, we firstly have to derive a system of $\sum_{j=1}^{N}l_{j}+N\sum_{k=1}^{N}m_{k}$ equations. Crucial ingredients of that system are the values of \eqref{sol} for certain values of $s$; i.e., see \eqref{sys2}, \eqref{sys10}. Thus, a critical issue refers to the number of iterations that we need. Numerical results show that we need a (relatively) small finite number of product form terms to derive the Laplace-Stieltjes transform vector $\tilde{\Phi}_{W}(s)$. In doing that, we truncate the infinite sums of products at a specific value of $k$, such that the absolute maximum norm of the difference of two consecutive terms to be smaller than $10^{-7}$, e.g., for the derivation of $\tilde{\Phi}_{W}(a\delta_{r,j})$, let $u_{r,j}(k):=\prod_{d=0}^{k-1}R(a^{d+1}\delta_{r,j})\tilde{V}(a^{k+1}\delta_{r,j})$, $r = 1,\ldots, l_j$, $j = 1,\ldots, N$. Then, we find the maximum $k_{r,j}$ such that $||u_{r,j}(k_{r,j})-u_{r,j}(k_{r,j}+1)||\leq 10^{-7}$.

The parameter $u$ is used to explore the
effect of increasing the expected service duration on the mean workload. We compare two cases: namely the case where interarrival and service times are autocorrelated i.e., where $P =\begin{pmatrix}
    0 &1\\
    1&0
\end{pmatrix}$, and the case where there is no autocorrelation, i.e., where $P =\begin{pmatrix}
    0.5 &0.5\\
0.5& 0.5
\end{pmatrix}$.
In such a scenario, we have positive cross-correlation
between the interarrival and service times. Figure \ref{fig2} shows that
the mean workload is higher when there is autocorrelation, and increases as $u$  increases.
\begin{figure}[htp!]
    \centering
    \includegraphics[width=1\linewidth]{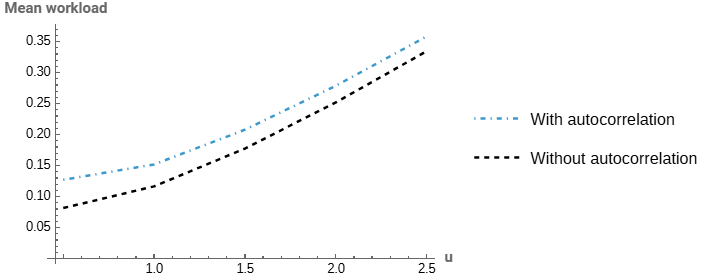}
    \caption{Mean workload vs $u$.}
    \label{fig2}
\end{figure}
\paragraph{Regarding Model II} For convenience, we assume that $S|Z=j\sim exp(\mu_{j})$, and $C|Z=j\sim exp(\theta_{j})$, $j=1,2,$ with $\theta_{1}=8$, $\theta_{2}=6$. $P=\begin{pmatrix}
        0.2&0.8\\
        0.6&0.4
    \end{pmatrix}$. Figure \ref{fig1} shows the expected waiting time as a function of $p$, for increasing values of $u$. Note that as $u$ increases, the mean waiting time also increases, since the mean service time increases. That increase becomes more apparent as $p$ increases, that is, when our model is more likely to operate as a regular MAP/G/1 queue rather than an alternating service model.
\begin{figure}[htp!]
    \centering
    \includegraphics[width=0.8\linewidth]{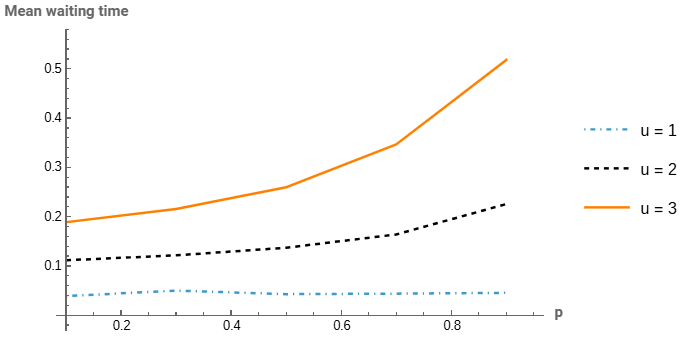}
    \caption{Mean waiting time vs $p$ for different values of $u$.}
    \label{fig1}
\end{figure}
\section{Concluding remarks and suggestions for future research}\label{conc}
In this work, we focused on the stationary analysis of a Markov-modulated version of a multiplicative Lindley-type recursion of the form $W_{n+1}=[V_{n}W_{n}+Y_{n}(V_{n})]^{+}$. We explicitly analyzed two cases: a) Model I: $V$ equals either 1 with probability $p_{1}$, or equals $a\in(0,1)$ with probability $p_{2}$, or it is negative with probability $p_{3}=1-p_{1}-p_{2}$, and $Y_{n}(V_{n})=S_{n}-A_{n+1}$, where $A_{n}$, $S_{n}$ have a rational LST. b) Model II: $V$ equals either one with probability $p\in(0,1)$ or minus one with probability $q=1-p$, and $Y_{n}(V_{n})$ have a general form and are also written as a difference of two nonnegative random variables that are dependent both on $V$. In both cases, $Y_{n}(V_{n})$ depend on the state of an irreducible discrete-time Markov chain with finite state space.

The analysis of Model I resulted in a vector-valued functional equation for the LST of the stationary distribution of $W$ that was solved recursively, by using the Wiener-Hopf theory. In Model II, we come up with a semi-Markov queueing system where the interarrival and service times depend linearly and randomly on the waiting times. We provided explicit stationary characteristics.

Cases which might allow explicit analysis are, for example: a) Allowing $D_{n}$ to follow a mixed Erlang distribution will not complicate the analysis in Model II. b) A challenging task might be to consider an additional dependence structure among $A_{n}$, $S_{n}$, and $D_{n}$, $C_{n}$, based on a copula. c) 
The analysis in Model II might be extended to the case where $V$ takes any finite number of negative values. 
Clearly, additional numerical experiments should be added in order to provide more insight into the stationary characteristics.

\section*{Acknowledgements}
The author gratefully acknowledges the Empirikion Foundation, Athens, Greece (\href{https://www.empirikion.gr/}{www.empirikion.gr}) for the financial support of this work. This work is dedicated to the memory of Antonis Kontis.


\appendix

\section{Proof of Theorem \ref{theo}}\label{roots}
 The proof follows the lines in \cite{adan2}. We first assume that $\beta_{i}^{*}(s)$ are analytic for all $s$, $Re(s)>-\zeta$, for some $\zeta>0$. Consider the circle $C_{\epsilon}$ with center $\widehat{\lambda}=max\{\lambda_{1},\ldots,\lambda_{N}\}$ and radius $\epsilon+\widehat{\lambda}$, $0<\epsilon<\zeta$. Let $G_{\epsilon}(s,u):=uH(s)+sI-\Lambda$, $u\in[0,1]$. We first prove that $det(G_{\epsilon}(s,u))\neq 0$, $u\in[0,1]$, $s\in C_{\epsilon}$. Indeed, $G_{\epsilon}(s,u)$, $s\in C_{\epsilon}$, $Re(s)\geq 0$ is diagonally dominant for $u\in[0,1]$ since
    \begin{displaymath}
        \begin{array}{l}
          |s-\lambda_{i}+u\lambda_{i}p_{i,i}p\beta_{i}^{*}(s)|\geq|\lambda_{i}-s|-|u\lambda_{i}p_{i,i}p\beta_{i}^{*}(s)| \geq \lambda_{i}-u\lambda_{i}p_{i,i}p\beta_{i}^{*}(0)\vspace{2mm}\\
          >u\lambda_{i}(1-p_{i,i}p\beta_{i}^{*}(0))=u\lambda_{i}p\sum_{j\neq i}p_{i,j}\beta_{j}^{*}(0)\geq |u\lambda_{i}p\sum_{j\neq i}p_{i,j}\beta_{j}^{*}(s)|.
        \end{array}
    \end{displaymath}
    Hence, its determinant is nonzero for $s\in C_{\epsilon}$, $Re(s)\geq 0$. 

    Note that $det(H(s)+sI-\lambda)=0$, is equivalent to $det(pB^{*}(s)P+s\Lambda^{-1}-I)=0$. To prove this for $s\in C_{\epsilon}$, $Re(s)< 0$, first note that the determinant is nonzero if and only if $s=0$ is not an eigenvalue of $upB^{*}(s)P+s\Lambda^{-1}-I$. So in a neighbor of $s=0$ we write
    \begin{displaymath}\begin{array}{c}
        upB^{*}(s)P+s\Lambda^{-1}-I=pP-I+s\Lambda^{-1}+((u-1)B^{*}(s)+B^{*}(s)-I)pP,\end{array}
    \end{displaymath}
and we see that for $(s,u)$ close to $(0,1)$, the above matrix is a perturbation of $pP-I$ that do not has an eigenvalue 0 for $p\in(0,1)$. Thus, the determinant is nonzero for $s\in C_{\epsilon}$, $Re(s)< 0$. 

Let $f(u)$ be the number of zeros of $det(upB^{*}(s)P+s\Lambda^{-1}-I)$ inside $C_{\epsilon}$:
\begin{displaymath}
    f(u)=\frac{1}{2\pi i}\int_{C_{\epsilon}}\frac{\frac{\partial}{\partial s}det(upB^{*}(s)P+s\Lambda^{-1}-I)}{det(upB^{*}(s)P+s\Lambda^{-1}-I)}ds.
\end{displaymath}
Clearly $f(0)=N$, since $det(s\Lambda^{-1}-I)=\prod_{j=1}^{N}(\frac{s}{\lambda_{i}}-1)$, and thus, the zeros are $s_{i}=\lambda_{i}$ and exactly $N$ have $Re(s)\geq 0$, and thus, are inside $C_{\epsilon}$. Since $f(u)$ is an integer-valued continuous function on $[0,1]$, it is constant, therefore, $f(1)=N$. As $\epsilon\to 0$, we conclude that $det(pB^{*}(s)P\Lambda-sI-\Lambda)$ has exactly $N$ zeros inside or on $C_{0}$.





 \bibliographystyle{abbrv} 
 \bibliography{mcap}

\begin{thebibliography}{10}

\bibitem{adan}
I.~Adan, O.~Boxma, and J.~Resing.
\newblock Functional equations with multiple recursive terms.
\newblock {\em Queueing Systems}, 102(1-2):7--23, 2022.

\bibitem{adan2}
I.~Adan and V.~Kulkarni.
\newblock Single-server queue with {M}arkov-dependent inter-arrival and service times.
\newblock {\em Queueing Systems}, 45:113--134, 2003.

\bibitem{albbox}
H.~Albrecher and O.~J. Boxma.
\newblock A ruin model with dependence between claim sizes and claim intervals.
\newblock {\em Insurance: Mathematics and Economics}, 35(2):245--254, 2004.

\bibitem{alt2}
E.~Altman.
\newblock Stochastic recursive equations with applications to queues with dependent vacations.
\newblock {\em Annals of Operations Research}, 112(1):43--61, 2002.

\bibitem{alt1}
E.~Altman.
\newblock Semi-linear stochastic difference equations.
\newblock {\em Discrete Event Dynamic Systems}, 19(1):115--136, 2009.

\bibitem{asmussen}
S.~Asmussen.
\newblock {\em Applied probability and queues}.
\newblock Springer, New York, 2003.

\bibitem{box3}
O.~Boxma, A.~L{\"o}pker, and M.~Mandjes.
\newblock On two classes of reflected autoregressive processes.
\newblock {\em Journal of Applied Probability}, 57(2):657--678, 2020.

\bibitem{box2}
O.~Boxma, A.~L{\"o}pker, M.~Mandjes, and Z.~Palmowski.
\newblock A multiplicative version of the {L}indley recursion.
\newblock {\em Queueing Systems}, 98:225--245, 2021.

\bibitem{boxman}
O.~Boxma and M.~Mandjes.
\newblock Queueing and risk models with dependencies.
\newblock {\em Queueing Systems}, 102(1-2):69--86, 2022.

\bibitem{box1}
O.~Boxma, M.~Mandjes, and J.~Reed.
\newblock On a class of reflected {AR}(1) processes.
\newblock {\em Journal of Applied Probability}, 53(3):818--832, 2016.

\bibitem{boxvla}
O.~J. Boxma and M.~Vlasiou.
\newblock On queues with service and interarrival times depending on waiting times.
\newblock {\em Queueing Systems}, 56(3):121--132, 2007.

\bibitem{brandt}
A.~Brandt.
\newblock The stochastic equation ${Y}_{n+ 1}= {A}_{n}{Y}_{n}+ {B}_{n}$ with stationary coefficients.
\newblock {\em Advances in Applied Probability}, 18(1):211--220, 1986.

\bibitem{bura}
D.~Buraczewski, E.~Damek, and T.~Mikosch.
\newblock {\em Stochastic models with power-law tails: The equation X= AX+ B}.
\newblock Springer, Cham, 2016.

\bibitem{cohen}
J.~W. Cohen.
\newblock The {W}iener-{H}opf technique in applied probability.
\newblock {\em Journal of Applied Probability}, 12(S1):145--156, 1975.

\bibitem{cohen1}
J.~W. Cohen.
\newblock {\em The single server queue}, volume~8.
\newblock Elsevier, 2012.

\bibitem{collam}
J.~F. Collamore.
\newblock Random recurrence equations and ruin in a {M}arkov-dependent stochastic economic environment.
\newblock {\em The Annals of Applied Probability}, 19(4):1404--1458, 2009.

\bibitem{desmit}
J.~H. De~Smit.
\newblock The queue {GI/M/s} with customers of different types or the queue {GI/Hm/s}.
\newblock {\em Advances in Applied Probability}, 15(2):392--419, 1983.

\bibitem{dimitriou2024markov}
I.~Dimitriou.
\newblock On {M}arkov-dependent reflected autoregressive processes and related models.
\newblock {\em Queueing Systems}, 109(3):1--39, 2025.

\bibitem{dimitriou2024}
I.~Dimitriou and D.~Fiems.
\newblock Some reflected autoregressive processes with dependencies.
\newblock {\em Queueing Systems}, 106(1):67--127, 2024.

\bibitem{emb}
P.~Embrechts and C.~Goldie.
\newblock Perpetuities and random equations.
\newblock In {\em Asymptotic Statistics: Proceedings of the Fifth Prague Symposium, held from September 4--9, 1993}, pages 75--86. Springer, 1994.

\bibitem{altfie}
D.~Fiems and E.~Altman.
\newblock Markov-modulated stochastic recursive equations with applications to delay-tolerant networks.
\newblock {\em Performance Evaluation}, 70(11):965--980, 2013.

\bibitem{hoo}
D.~Huang.
\newblock On a modified version of the {L}indley recursion.
\newblock {\em Queueing Systems}, 105(3):271--289, 2023.

\bibitem{Jacquet}
P.~Jacquet.
\newblock Subexponential tail distribution in {L}a{P}alice queues.
\newblock In {\em Proceedings of the 1992 ACM SIGMETRICS joint International Conference on Measurement and Modeling of Computer Systems}, pages 60--69, 1992.

\bibitem{le}
J.~Le~Bihan and B.~Ko{\l}odziejek.
\newblock Perpetuities with light tails and the local dependence measure.
\newblock {\em Stochastic Processes and their Applications}, page 104740, 2025.

\bibitem{tang}
A.~Pacheco, L.~C. Tang, and N.~U. Prabhu.
\newblock {\em Markov-Modulated Processes and Semiregenerative Phenomena}.
\newblock WORLD SCIENTIFIC, 2008.

\bibitem{park}
B.~C. Park, J.~Y. Park, and R.~D. Foley.
\newblock Carousel system performance.
\newblock {\em Journal of Applied Probability}, 40(3):602--612, 2003.

\bibitem{tit}
E.~C. Titchmarsh.
\newblock {\em The theory of functions}.
\newblock Oxford University Press, Oxford, 1939.

\bibitem{vlasiou}
M.~Vlasiou.
\newblock {\em Lindley-type recursions}.
\newblock PhD thesis, Technische Universiteit Eindhoven, 2006.

\bibitem{vlasioudep}
M.~Vlasiou, I.~Adan, and O.~Boxma.
\newblock A two-station queue with dependent preparation and service times.
\newblock {\em European Journal of Operational Research}, 195(1):104--116, 2009.

\bibitem{whitt}
W.~Whitt.
\newblock Queues with service times and interarrival times depending linearly and randomly upon waiting times.
\newblock {\em Queueing Systems}, 6:335--351, 1990.

\end{thebibliography}
 \end{document}